\date{}
\begin{document}

\centerline{}

\centerline {\Large{\bf Uniform Boundedness Principle and Hahn-Banach Theorem }}
\centerline{\Large{\bf for b-linear functional related to linear 2-normed space }}
%% My definition
\newcommand{\mvec}[1]{\mbox{\bfseries\itshape #1}}
\centerline{}
\centerline{\textbf{Prasenjit Ghosh}}
\centerline{Department of Pure Mathematics, University of Calcutta,}
\centerline{35, Ballygunge Circular Road, Kolkata, 700019, West Bengal, India}
\centerline{e-mail: prasenjitpuremath@gmail.com}
\centerline{\textbf{Sanjay Roy}}
\centerline{Department of Mathematics, Uluberia College,}
\centerline{Uluberia, Howrah, 711315,  West Bengal, India}
\centerline{e-mail: sanjaypuremath@gmail.com}
\centerline{\textbf{T. K. Samanta}}
\centerline{Department of Mathematics, Uluberia College,}
\centerline{Uluberia, Howrah, 711315,  West Bengal, India}
\centerline{e-mail: mumpu$_{-}$tapas5@yahoo.co.in}

\newtheorem{Theorem}{\quad Theorem}[section]

\newtheorem{definition}[Theorem]{\quad Definition}

\newtheorem{theorem}[Theorem]{\quad Theorem}

\newtheorem{remark}[Theorem]{\quad Remark}

\newtheorem{corollary}[Theorem]{\quad Corollary}

\newtheorem{note}[Theorem]{\quad Note}

\newtheorem{lemma}[Theorem]{\quad Lemma}

\newtheorem{example}[Theorem]{\quad Example}

\newtheorem{result}[Theorem]{\quad Result}
\newtheorem{conclusion}[Theorem]{\quad Conclusion}

\newtheorem{proposition}[Theorem]{\quad Proposition}

\begin{abstract}
\textbf{\emph{In this paper,\;we will see that the Cartesian product of two 2-Banach spaces is also 2-Banach space and discuss some properties of closed linear operator in linear 2-normed space\,.\;We also describe the concept of different types of continuity of b-linear functional and derive the Uniform Boundedness Principle and Hahn-Banach extension theorem for b-linear functionals in the case of linear 2-normed spaces\,.\;We also introduce the notion of weak\,*\,convergence for the sequence of bounded b-linear functionals relative to linear 2-normed space\,.}}
\end{abstract}
{\bf Keywords:}  \emph{linear 2-normed space, 2-Banach space, Closed operator, Uniform Boundedness Principle, Hahn-Banach extension Theorem\,.}\\

{\bf 2010 Mathematics Subject Classification:} 46A22,\;46B07,\;46B25\,.
\\
\\

%=====================================
\section{Introduction}
%=====================================

\smallskip\hspace{.6 cm}  The Uniform boundedness principle is one of the most useful results in functional analysis which was obtained by S. Banach and H. Steinhaus in \,$1927$\; and it is also familiar as Banach-Steinhaus Theorem\,.\;The Uniform boundedness principle tells us that if a sequence of bounded linear operators \,$T_{\,n}\, \,\in\, B\,(\,X \,,\, Y\,)$, where \,$X$\; is a Banach space and \,$Y$\; a normed space, is pointwise bounded, then the sequence \,$\{\,T_{\,n}\,\}$\; is uniformly bounded\,.\\
The Hahn-Banach theorem is another useful and important theorem in functional analysis and it is frequently applied in other branches of mathematics viz\,.\,, algebra, geometry, optimization, partial differential equation and so on\,.\;In fact, in this theorem a bounded linear functional defined on a subspace can be extended into the entire space\,.\\
The idea of linear 2-normed space was first introduced by S. Gahler (\cite{Gahler}) and thereafter the geometric structure of linear 2-normed spaces was developed by the great mathematicians like A. White, Y. J. Cho, R. W. Freese, S. C. Gupta and others \cite{White, Gupta, Freese}\,.\;In recent times, some important results in classical normed spaces have been proved into 2-norm setting by many researchers\,. \\In this paper, we will see that in the Cartesian product \,$X \,\times\, Y $, we can induce a 2-norm using the 2-norms of \,$X$\, and \,$Y$\, and then we describe the concept of different types of continuity of b-linear functional in the case of linear 2-normed spaces and establish some results related to such types of continuity\,.\;In this paper, we are going to construct the uniform boundedness principle and Hahn-Banach extension theorem for a bounded b-linear functional defined on a 2-Banach space\,.\;Moreover we introduce a notion of weak\,*\,convergence of the sequence of bounded b-linear functionals in linear 2-normed spaces\,.

%=====================================
\section{Preliminaries}
%=====================================

\smallskip\hspace{.6 cm}
\begin{definition}(\cite{Gahler})
Let \,$X$\, be a linear space of dimension greater than \,$1$\, over the field \,$ \mathbb{K} $\,, where \, 
$ \mathbb{K} $\, is the real or complex numbers field and \,$\|\, \cdot \;,\; \cdot\,\|$\; be a \,$\mathbb{K}$-valued function defined on \,$X \,\times\, X$\; satisfying the following conditions:
\begin{itemize}
\item[(N1)] $\|\,x \;,\; y \,\| \;\,=\,\; 0 $\; if and only if \,$x \;,\; y $\; are linearly dependent,
\item[(N2)] $\|\,x \;,\; y\,\| \;\,=\,\; \|\,y \;,\; x \,\|$\,,
\item[(N3)] $\|\,\alpha \,x \;,\; y\,\|\;\,=\,\;|\,\alpha\,| \;\| \,x \;,\; y\,\|\; \;\forall \;\;\alpha \;\in\; \mathbb{K}$\,,
\item[(N4)] $\|\,x \;,\; y \;+\; z\,\| \,\;\leq\,\; \|\,x \;,\; y\,\| \;+\; \|\,x \;,\; z\,\|$\,.
\end{itemize}
Then \,$\,\|\,\cdot \;,\; \cdot\,\|\,$\, is called a 2-norm on \,$X$\, and the pair \,$\left(\,X \;,\; \|\,\cdot \;,\; \cdot\,\| \,\right)$\; is called a linear 2-normed space\,.\;The non-negativity condition of 2-norm can be obtained by using \,$(\,N3\,) \;\&\; (\,N4\,)$\,.
\end{definition}

\begin{definition}(\cite{Freese})
Let \,$X$\; be a linear 2-normed space\,.\;A sequence \,$\{\,x_{\,n}\,\}$\; in \,$X$\, is said to be convergent to some \,$x \,\in\, X$\; if 

\[\lim\limits_{n \to \infty}\,\|\,x_{\,n} \;-\; x \;,\; y\,\| \;\;=\;\; 0 \]
for every \;$y \;\in\; X$\; and it is called Cauchy sequence if 
\[\lim\limits_{n \,,\,m \to \infty}\,\|\,x_{\,n} \;-\; x_{\,m} \;,\; z\,\| \;\;=\;\; 0 \]
for every \;$z \;\in\; X$\,.\;$X$\, is said to be complete if every Cauchy sequence in this space is convergent in \,$X$\,.\;A linear 2-normed space is called 2-Banach space if it is complete\,.
\end{definition}

\begin{definition}(\cite{Riyas})
Define the following open and closed ball in linear 2-normed space \,$X$\,:
\[ B_{\,e}\,(\,a \;,\; \delta\,) \;\,=\,\; \left\{\;x \;\in\; X \;:\; \|\,x \;-\; a \;,\; e\,\| \;<\; \delta \;\right\} \]
\[ B_{\,e}\,[\,a \;,\; \delta\,] \;\,=\,\; \left\{\;x \;\in\; X \;:\; \|\,x \;-\; a \;,\; e\,\| \;\,\leq\,\; \delta \;\right\}\;,\]
where \,$a \,,\, e \;\in\; X$\; and \,$\delta$\; be a positive number\,.
\end{definition}

\begin{definition}(\cite{Riyas})
A subset \,$G$\; of a linear 2-normed space \,$X$\; is said to be open in \,$X$\; if for all \,$ a \,\in\, G $, there exists \,$e \,\in\, X $\; and \,$\delta \,>\, 0$\; such that \,$ B_{\,e}\,(\,a \;,\; \delta\,) \;\subseteq\; G$\,.
\end{definition}

\begin{definition}(\cite{Ravindran})
Let \,$X$\; be a linear 2-normed space and \,$A \,\subseteq\, X$\,.\;Then a point \,$ a \,\in\, A $\; is said to an interior point of \,$A$\; if there exists \,$e \,\in\, X$\; and \,$\delta \,>\, 0$\; such that \,$B_{e}\,(\,a \;,\; \delta\,) \,\subseteq\, A$\,.
\end{definition}

\begin{definition}(\cite{Riyas}).
Let \,$X$\; be a linear 2-normed space\,.\;Then \,$ G \,\subseteq\, X$\; is said to be dense in \,$X$\; if \;$V \,\cap\, G \,\neq\, \phi$\; for every open set \,$V$\; in \,$X$\,.
\end{definition}

\begin{definition}(\cite{Ravindran})
Let \,$X$\; be a linear 2-normed space and \,$A \,\subseteq\, X$\,.\;Then the closure of \,$A$\, is denoted by \,$ \overline{A} $\, and defined as\,, 
\[ \{\; x \;\in\; X \;|\; \;\exists\; \;\{\,x_{\,n}\,\} \;\in\; A \;\;\textit{with}\;  \lim\limits_{n \,\to\, \infty} x_{\,n} \;=\; x \;\}\;.\]
The set \,$ A $\; is said to be closed if \,$ A \;=\; \overline{A}$. 
\end{definition}

\begin{theorem}\,(Baire's theorem for 2-Banach spaces)\label{th1}(\cite{Pilakkat, Riyas})
Let \,$X$\, be a 2-Banach space\,.\;Then the intersection of a countable number of dense open subsets of \,$X$\, is dense in \,$X$\,.
\end{theorem}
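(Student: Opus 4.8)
\smallskip
The plan is to imitate the classical nested-ball proof of Baire's category theorem, with the balls \,$B_{e}(a,\delta)$\, of Definition~2.3 playing the role of metric balls. First I would reduce the statement to the following: whenever \,$\{G_{n}\}_{n\geq 1}$\, is a countable family of dense open subsets of \,$X$\, and \,$G_{0}$\, is an arbitrary nonempty open subset, then \,$G_{0}\cap\bigcap_{n\geq 1}G_{n}\neq\emptyset$\,; letting \,$G_{0}$\, range over all nonempty open sets, this is exactly the assertion that \,$\bigcap_{n\geq 1}G_{n}$\, is dense in the sense of Definition~2.6. I would also note that each \,$B_{e}(a,\delta)$\, is itself open, since for \,$x\in B_{e}(a,\delta)$\, one has \,$B_{e}\bigl(x,\,\delta-\|x-a,e\|\bigr)\subseteq B_{e}(a,\delta)$\, by (N4).

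Next I would construct, by induction on \,$n$\,, points \,$x_{n}\in X$\,, radii \,$0<r_{n}\leq 2^{-n}$\,, and one fixed nonzero vector \,$e\in X$\, with
\[
B_{e}[\,x_{0}\,,\,r_{0}\,]\subseteq G_{0},\qquad B_{e}[\,x_{n}\,,\,r_{n}\,]\subseteq G_{n}\cap B_{e}(\,x_{n-1}\,,\,r_{n-1}\,)\quad(n\geq 1).
\]
At each step \,$G_{n}\cap B_{e}(x_{n-1},r_{n-1})$\, is open and, by density of \,$G_{n}$\,, nonempty, hence contains some \,$B_{c}(x_{n},\rho)$\,. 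The reason one can keep the direction fixed is that such a ball can sit inside the slab \,$B_{e}(x_{n-1},r_{n-1})$\, only if \,$c$\, is a scalar multiple of \,$e$\,: otherwise the line \,$\{\,x_{n}+t\,c\,:\,t\in\mathbb{K}\,\}\subseteq B_{c}(x_{n},\rho)$\, would escape the slab as \,$|t|\to\infty$\, (by (N1), (N3), (N4)); writing \,$c=\mu e$\, with \,$\mu\neq 0$\, one gets \,$B_{c}(x_{n},\rho)=B_{e}(x_{n},\rho/|\mu|)$\,, and one shrinks to a closed \,$e$-ball \,$B_{e}[x_{n},r_{n}]$\, with \,$r_{n}\leq 2^{-n}$\, still inside \,$G_{n}\cap B_{e}(x_{n-1},r_{n-1})$\,. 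Nesting then gives \,$\|x_{m}-x_{n}\,,\,e\|\leq r_{n}$\, for all \,$m\geq n$\,, so \,$\|x_{m}-x_{n}\,,\,e\|\to 0$\,.

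To finish, I would pass to a limit: if \,$\{x_{n}\}$\, converges to some \,$x\in X$\,, then by (N4) and \,$x_{m}\to x$\, (Definition~2.2) we get \,$\|x-x_{n}\,,\,e\|\leq r_{n}$\,, i.e. \,$x\in B_{e}[x_{n},r_{n}]\subseteq G_{n}$\, for every \,$n\geq 1$\,, while \,$x\in B_{e}[x_{1},r_{1}]\subseteq B_{e}(x_{0},r_{0})\subseteq G_{0}$\,; hence \,$x\in G_{0}\cap\bigcap_{n\geq 1}G_{n}$\,, as required. The crux, and the step I expect to be the main obstacle, is proving that \,$\{x_{n}\}$\, actually converges. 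This is delicate precisely because, unlike in a normed space, the balls \,$B_{e}(a,\delta)$\, are \emph{unbounded} — each one contains an entire line — so trapping the \,$x_{n}$\, in a shrinking tower of such balls makes \,$\{x_{n}\}$\, Cauchy only relative to the single direction \,$e$\,, which is strictly weaker than Cauchy in the sense of Definition~2.2, and one cannot yet invoke completeness of \,$X$\,. To recover a genuine Cauchy sequence I would enrich the induction, at stage \,$n$\, also demanding that \,$\|x_{n}-x_{n-1}\,,\,y\|$\, be small for suitable additional auxiliary vectors \,$y$\, (legitimate, since finite intersections of balls are again open and nonempty), chosen so that in the limit \,$\{x_{n}\}$\, is Cauchy in the full sense; completeness of the 2-Banach space \,$X$\, then supplies the limit \,$x$\,. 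Making this bookkeeping work while staying compatible with the nesting that confines us to the \,$G_{n}$\, is the technical heart of the proof.
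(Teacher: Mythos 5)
The paper offers no proof of this theorem at all --- it is imported verbatim from the two cited references --- so your argument has to stand on its own, and as written it does not close. You correctly isolate the crux: a nested tower of balls \,$B_{e}[x_{n},r_{n}]$\, in a single direction \,$e$\, only yields \,$\|x_{m}-x_{n}\,,\,e\|\to 0$\,, which is far weaker than Cauchy in the sense of Definition~2.2 (for instance \,$x_{n}=x_{0}+n\,e$\, satisfies \,$\|x_{m}-x_{n}\,,\,e\|=0$\, for all \,$m,n$\, yet converges to nothing), so completeness cannot be invoked. The repair you sketch --- enriching the induction by also controlling \,$\|x_{n}-x_{n-1}\,,\,y\|$\, for auxiliary directions \,$y$\,, on the grounds that finite intersections of balls are again open --- is defeated by your own line argument two sentences earlier. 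Any ball \,$B_{c}(x,\rho)$\, with \,$c\neq 0$\, contains the entire line \,$\{x+tc : t\in\mathbb{K}\}$\, because \,$\|tc\,,\,c\|=0$\,; for that line to lie in \,$B_{e}(a,\delta)$\, one needs \,$\|c\,,\,e\|=0$\,, i.e.\ \,$c$\, proportional to \,$e$\,, and for it to lie simultaneously in \,$B_{y}(a',\delta')$\, with \,$y$\, linearly independent of \,$e$\, one would also need \,$c$\, proportional to \,$y$\,, forcing \,$c=0$\,. Hence the intersection of two balls in linearly independent directions contains no ball whatsoever: it is not open in the sense of Definition~2.4, the density hypothesis (Definition~2.6) says nothing about it, and the inductive step ``choose a ball inside it'' is impossible. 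In other words, the sets declared open by Definition~2.4 are not closed under finite intersection, and that is precisely the pathology your bookkeeping would have to circumvent.

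So the ``technical heart'' you defer is not mere bookkeeping; in the form you set it up it cannot be carried out, and some genuinely different device is required (for example, working in the topology \emph{generated} by the balls, or reducing to an auxiliary norm built from \,$\|\cdot\,,\,e_{1}\|,\dots,\|\cdot\,,\,e_{k}\|$\, for a fixed independent family, so that a single classical Baire argument applies). Since the paper itself gives no proof, I cannot tell you which of these routes the cited sources take, but your proposal as written contains a genuine gap exactly where you predicted the difficulty would be.
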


\begin{definition}(\cite{Pilakkat})
Let \,$X$\, and \,$Y$\, be two linear 2-normed spaces over the field \,$\mathbb{K}$\,.\;Then a linear operator \,$T \,:\, X \,\to\, Y$\; is said to be closed if for every \,$\left\{\,x_{\,n}\,\right\}$\; in \,$X$\, with \,$x_{\,n} \,\to\, x $\; and \;$T\,(\,x_{\,n}\,) \,\to\, y$\; in \,$Y$\, we have \;$x \;\in\; X$\; and \;$T\,(\,x\,) \,=\, y$\,.
\end{definition}

\begin{definition}(\cite{Riyas})
Let \,$X$\, and \,$Y$\, be two linear 2-normed spaces over \,$\mathbb{R}$\, and \,$T \,:\, X \,\to\, Y$\; be a linear operator\,.\;The operator \,$T$\, is said to be sequentially continuous at \,$x \,\in\, X$\, if and only if for every sequence \,$\left\{\,x_{\,n}\,\right\}$\; in \,$X$\, that converges to \,$x$, the sequence \,$\left\{\,T\,(\,x_{\,n}\,)\,\right\}$\; converges to \;$T\,(\,x\,)$\,.
\end{definition}

\begin{theorem}\label{th2}(\cite{Riyas})
Let \,$X$\, and \,$Y$\, be two linear 2-normed spaces over \,$\mathbb{R}$\,.\;If \,$X$\, is finite dimensional, then every linear operator \,$T \,:\, X \,\to\, Y$\; is sequentially continuous\,.
\end{theorem}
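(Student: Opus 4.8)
The plan is to use linearity of $T$ to reduce everything to null sequences: if $\{x_n\}$ converges to $x$ in $X$, then setting $u_n = x_n - x$ we have $u_n \to 0$ and $T(x_n) - T(x) = T(u_n)$, so it suffices to prove $T(u_n) \to 0$ in $Y$. Fix a basis $\{e_1, \dots, e_k\}$ of $X$; here $k = \dim X \geq 2$ by the very definition of a linear 2-normed space. Write $u_n = \sum_{i=1}^{k} \alpha_i^{(n)} e_i$ with scalars $\alpha_i^{(n)} \in \mathbb{R}$. The core of the argument is the claim that $u_n \to 0$ forces $\alpha_i^{(n)} \to 0$ for each $i$. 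Granting this, for every $z \in Y$ linearity of $T$ together with (N3) and (N4) yields
\[
\bigl\| T(u_n) \,,\, z \bigr\| \;=\; \Bigl\| \sum_{i=1}^{k} \alpha_i^{(n)}\, T(e_i) \,,\, z \Bigr\| \;\leq\; \sum_{i=1}^{k} \bigl|\alpha_i^{(n)}\bigr|\, \bigl\| T(e_i) \,,\, z \bigr\| \;\longrightarrow\; 0,
\]
so $\{T(x_n)\}$ converges to $T(x)$, which is precisely the sequential continuity of $T$.

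To prove the claim, fix an index $j$ and let $V_j$ be the linear span of $\{e_i : i \neq j\}$. First I would observe that the function $w \mapsto \|\, w \,,\, e_j \,\|$ is an honest norm on $V_j$: absolute homogeneity and subadditivity are immediate from (N3) and (N4), and by (N1) it vanishes only when $w$ is linearly dependent with $e_j$, which for $w \in V_j$ (and $e_j \notin V_j$) means $w = 0$. Next, decomposing $u_n = w_n + \alpha_j^{(n)} e_j$ with $w_n = \sum_{i \neq j} \alpha_i^{(n)} e_i \in V_j$, two applications of (N4) and the identity $\|\, e_j \,,\, e_j \,\| = 0$ give $\|\, u_n \,,\, e_j \,\| = \|\, w_n \,,\, e_j \,\|$.

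Since $V_j$ is finite-dimensional, the norm $\|\, \cdot \,,\, e_j \,\|$ on $V_j$ is equivalent to the norm $\sum_{i \neq j} \beta_i e_i \mapsto \sum_{i \neq j} |\beta_i|$, so there is a constant $C_j > 0$ with $\sum_{i \neq j} |\alpha_i^{(n)}| \leq C_j \|\, w_n \,,\, e_j \,\| = C_j \|\, u_n \,,\, e_j \,\|$. Because $u_n \to 0$ in the sense of the convergence definition for linear 2-normed spaces, $\|\, u_n \,,\, e_j \,\| \to 0$, hence $\alpha_i^{(n)} \to 0$ for all $i \neq j$. Running $j$ over the two values $1$ and $2$ then accounts for every index $i$, which proves the claim.

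The only genuinely nontrivial step is this passage from the hypothesis ``$\|\, u_n \,,\, y \,\| \to 0$ for all $y$'' to ordinary coordinate convergence of the $\alpha_i^{(n)}$; everything else is bookkeeping with the 2-norm axioms and linearity. That step is handled by the remark that, after the $e_j$-component is absorbed, $\|\, \cdot \,,\, e_j \,\|$ restricts to a genuine norm on the complementary coordinate subspace, so the classical equivalence-of-norms theorem on finite-dimensional spaces can take over. A minor point to keep in mind is that $\dim X \geq 2$ is needed so that at least two indices $j$ are available (a single $j$ misses the coordinate $\alpha_j^{(n)}$), but this is built into the standing definition of a linear 2-normed space.
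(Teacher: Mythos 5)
The paper states this theorem without proof, quoting it from the reference [Riyas--Ravindran]; the only trace of an argument in the paper itself is the later remark, inside the proof of the finite-dimensional b-sequential-continuity theorem, that ``in Theorem \ref{th2} it has been shown that $a_{n,j} \to a_j$'' --- i.e.\ exactly the coordinate-convergence claim you isolate as the core of your argument. Your proof is correct and is essentially the standard one: the reduction to null sequences, the observation that $w \mapsto \|w, e_j\|$ is a genuine norm on the complementary subspace $V_j$ (with the definiteness coming from $e_j \notin V_j$ via (N1)), the identity $\|u_n, e_j\| = \|w_n, e_j\|$, and the appeal to equivalence of norms in finite dimensions are all sound, and you correctly note that $\dim X \geq 2$ supplies the two indices $j$ needed to capture every coordinate. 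Nothing is missing; this fills a gap the paper leaves to the literature rather than diverging from an argument the paper actually gives.
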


\begin{definition}(\cite{Harikrishnan})
Let \,$\left(\,X \;,\; \|\; \cdot \;,\; \cdot \;\|\,\right)$\; be a linear 2-normed space over the field \,$\mathbb{K}$\; with \,$b \,\in\, X$\; be fixed  and \,$\left <\,b\,\right >$\; is the subspace of \,$X$\, generated by \,$b$\,.\;Let \,$W$\, be a subspace of \,$X$, then a mapping \,$T \,:\, W \,\times\, \left <\,b\,\right > \,\to\, \mathbb{K}$\; is called a b-linear functional on \,$W \,\times\, \left <\,b\,\right >$, if the following two hold:
\begin{itemize}
\item[(1)] \;$T\,(\;x \;+\; y \;,\; b\;) \;=\; T\,(\;x \;,\; b\;) \;+\; T\,(\;y \;,\; b\;)\; \;\forall\; x\;,\; y \;\in\; W$\,.
\item[(2)] \;$T\,(\;k\,x \;,\; b\;) \;=\; k\; T\,(\;x \;,\; b\;)\; \;\forall\; k \;\in\; \mathbb{K}$\,.
\end{itemize}
A \,b-linear functional \,$T \,:\, W \,\times\, \left <\,b\,\right > \,\to\, \mathbb{K}$\; is said to be bounded if there exists a real number \,$M \,>\, 0$\, such that 
\[|\,T\,(\;x \;,\; b\;)\,| \;\leq\; M \;\|\;x \;,\; b\;\|\; \;\forall\; x \;\in\; W\;.\]\;Now we can define the norm of the b-linear functional \;$T \,:\, W \,\times\, \left <\,b\,\right > \,\to\, \mathbb{K}$\; as \[\|\, T \,\| \;\,=\,\; \inf \left\{\,M \;>\; 0 \;:\; |\, T\,(\,x \;,\; b\,) \,| \;\leq\; M\, \|\,x \;,\; b\,\|\;\;  \;\forall\;x \;\in\; W\,\right\}\;.\]
Then one can easily verified that,
\[ \|\,T\,\| \;\,=\,\; \sup \{\,|\,T\,(\,x \;,\; b\,)\,| \;:\; \|\,x \;,\; b\,\| \;\leq\; 1\,\} \]
\[ \|\,T\,\| \;\,=\,\; \sup \{\,|\,T\,(\,x \;,\; b\,)\,| \;:\; \|\,x \;,\; b\,\| \;\,=\,\; 1\,\} \] 
\[\hspace{.3cm}  \|\,T\,\| \;\,=\,\; \sup \left \{\,\dfrac{|\,T\,(\,x \;,\; b\,)\,|}{\|\,x \;,\; b\,\|} \;:\; \|\,x \;,\; b\,\| \;\neq\; 0\,\right \} \] and then 
\[ |\, T\,(\,x \;,\; b\,) \,| \;\leq\; \|\,T\,\| \; \|\,x \;,\; b\,\| \; \;\forall\; x \;\in\; W\;.\]
\end{definition}
Let \;$X_{\,b}^{\,\ast}$\; denote the Banach space of all bounded b-linear functionals defined on \;$X \,\times\, \left <\,b\,\right > $\,.

%=====================================
\section{Analogous Results in the classical normed spaces to 2-normed spaces}
%=====================================

\begin{theorem}
Let \,$\left(\,X \;,\; \|\; \cdot \;,\; \cdot \;\|_{X} \,\right)$\; and \;$\left(\,Y \;,\; \|\; \cdot \;,\; \cdot \;\|_{Y} \,\right)$\; be two linear 2-normed linear spaces over the field \,$\mathbb{K}$\,, Then in the Cartesian product \,$X \,\times\, Y$, we can induce a 2-norm \;$\|\,\cdot \;,\; \cdot\,\|$\; using the 2-norms of \,$X$\, and \,$Y$\,.\;Furthermore, if \,$X$\, and \,$Y$\, are 2-Banach spaces then \,$X \,\times\, Y$\; is also 2-Banach space\,.
\end{theorem}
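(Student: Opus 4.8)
The plan is to put on $X \times Y$ the function
\[
\big\|\,(x_{1},y_{1})\;,\;(x_{2},y_{2})\,\big\| \;\,=\,\; \|\,x_{1}\;,\;x_{2}\,\|_{X} \;+\; \|\,y_{1}\;,\;y_{2}\,\|_{Y}\,,
\]
for $(x_{1},y_{1}),(x_{2},y_{2}) \,\in\, X \times Y$, and to argue in two stages: first that this is a genuine 2-norm, then that completeness is inherited from the two factors.

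For the axioms, (N2) and (N3) fall out immediately from the corresponding axioms of $\|\cdot,\cdot\|_{X}$ and $\|\cdot,\cdot\|_{Y}$, since both the swap and scalar multiplication act coordinatewise, while (N4) is obtained by adding, termwise, the triangle inequalities in $X$ and in $Y$. The one axiom that needs real work is (N1). If $(x_{1},y_{1})$ and $(x_{2},y_{2})$ are linearly dependent in $X \times Y$, a single pair of scalars not both zero witnesses dependence of $x_{1},x_{2}$ in $X$ and of $y_{1},y_{2}$ in $Y$, so both summands vanish; conversely, a sum of two non-negative reals is zero only when each is, so $\|x_{1},x_{2}\|_{X}=0$ and $\|y_{1},y_{2}\|_{Y}=0$, and linear dependence of the pair in $X \times Y$ is then read off from (N1) in each coordinate. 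Making this equivalence precise --- in particular identifying exactly which pairs from $X \times Y$ the proposed 2-norm annihilates --- is the step I expect to demand the most care.

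For the completeness claim I would take a Cauchy sequence $\{(x_{n},y_{n})\}$ in $\left(X \times Y,\|\cdot,\cdot\|\right)$, substitute into the Cauchy condition the test vectors $(z,0)$ with $z \in X$ and $(0,z)$ with $z \in Y$, and use non-negativity of the two summands to obtain
\[
\lim_{n,m \to \infty}\|\,x_{n}-x_{m}\;,\;z\,\|_{X} = 0 \;\;\text{for all } z \in X, \qquad \lim_{n,m \to \infty}\|\,y_{n}-y_{m}\;,\;z\,\|_{Y} = 0 \;\;\text{for all } z \in Y\,.
\]
Hence $\{x_{n}\}$ is Cauchy in $X$ and $\{y_{n}\}$ in $Y$; since $X$ and $Y$ are 2-Banach there are $x \in X$, $y \in Y$ with $x_{n} \to x$ and $y_{n} \to y$. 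Finally, for any $(z_{1},z_{2}) \in X \times Y$ one has
\[
\big\|\,(x_{n},y_{n})-(x,y)\;,\;(z_{1},z_{2})\,\big\| \;\,=\,\; \|\,x_{n}-x\;,\;z_{1}\,\|_{X} + \|\,y_{n}-y\;,\;z_{2}\,\|_{Y} \;\longrightarrow\; 0\,,
\]
so $(x_{n},y_{n}) \to (x,y)$ in $X \times Y$ and the product is 2-Banach. Beyond the bookkeeping in (N1), every step is a coordinatewise transcription of a one-space fact, so I do not anticipate further obstacles.
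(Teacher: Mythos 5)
Your route is the same as the paper's: the same candidate formula $\left\|\,(x_{1},y_{1})\,,\,(x_{2},y_{2})\,\right\| \,=\, \|x_{1},x_{2}\|_{X}+\|y_{1},y_{2}\|_{Y}$, the same coordinatewise verification of (N2)--(N4), and the same reduction of completeness to the two factors. Your completeness half is sound and even slightly cleaner than the paper's (testing against $(z,0)$ and $(0,z)$ kills one summand outright, rather than splitting a sum of non-negative terms). But the (N1) step, which you rightly single out as the one demanding care, contains a genuine gap that more care cannot close: the implication you need --- that $\|x_{1},x_{2}\|_{X}=0$ and $\|y_{1},y_{2}\|_{Y}=0$ force $(x_{1},y_{1})$ and $(x_{2},y_{2})$ to be linearly dependent in $X\times Y$ --- is false. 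Vanishing of the two summands gives one dependence relation in $X$ and another in $Y$, with possibly different coefficients, and no single pair of scalars need witness both simultaneously. Concretely, take $X=Y=\mathbb{R}^{2}$ with the determinant 2-norm and set $u=\bigl((1,0),(0,0)\bigr)$, $v=\bigl((2,0),(1,0)\bigr)$: both summands vanish (each coordinate pair is dependent, the second because one of its vectors is zero), yet $\alpha u+\beta v=0$ forces $\beta=0$ from the second coordinate and then $\alpha=0$ from the first, so $u,v$ are linearly independent while the proposed 2-norm assigns them the value $0$. Hence the formula is not a 2-norm on the full product.

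You should know that the paper's own proof asserts exactly the same false equivalence (its (N1) display claims ``$\{x_{1},x_{2}\}$ and $\{y_{1},y_{2}\}$ linearly dependent $\Leftrightarrow$ $(x_{1},y_{1}),(x_{2},y_{2})$ linearly dependent in $X\times Y$''), so you have reproduced the published argument, defect included; only the forward direction (dependence in the product implies dependence in each coordinate, hence vanishing) is valid. Repairing the theorem requires either a different candidate 2-norm whose zero set is exactly the dependent pairs, or a weakening of the conclusion; it is not a matter of bookkeeping within the present construction.
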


\begin{proof}
Define a function \,$\|\, \cdot \;,\; \cdot \;\|\; \,:\, \left(\,X \,\times\, Y\,\right) \,\times\, \left(\,X \,\times\, Y\,\right) \,\to\, \mathbb{R}$\; by,
\[ \left\|\,(\,x_{\,1} \,,\, y_{\,1}\,) \,,\, (\,x_{\,2} \,,\, y_{\,2}\,)\,\right\| \,=\, \left\|\,x_{\,1} \,,\, x_{\,2}\,\right\|_{X} \,+\, \left\|\,y_{\,1} \,,\, y_{\,2}\,\right\|_{Y}\]
for all \,$(\,x_{\,1} \,,\, y_{\,1}\,) \,,\, (\,x_{\,2} \,,\, y_{\,2}\,) \,\in\, (\,X \,\times\, Y\,)$\,.\;We now verify that this function is a 2-norm on \,$X \,\times\, Y$\,.
\begin{itemize}
\item[(\,N\,1\,)]\;\; Suppose
\[\left\|\, (\,x_{\,1} \,,\, y_{\,1}\,) \;,\; (\,x_{\,2} \,,\, y_{\,2}\,) \,\right\| \,=\, 0 \; \;\forall\; (\,x_{\,1} \,,\, y_{\,1}\,) \,,\, (\,x_{\,2} \,,\, y_{\,2}\,) \,\in\, (\,X \,\times\, Y\,)\]
\[\Leftrightarrow \left\|\,x_{\,1} \;,\; x_{\,2}\,\right\|_{X} \;+\; \left\|\, y_{\,1} \;,\; y_{\,2}\,\right\|_{Y} \,=\, 0 \hspace{6.4cm} \]
\[ \hspace{.2cm}\;\Leftrightarrow\; \left\|\,x_{\,1} \;,\; x_{\,2}\,\right\|_{X} \;\,=\,\; 0 \;,\; \left\|\, y_{\,1} \;,\; y_{\,2}\,\right\|_{Y} \;=\; 0\; \;\text{for}\; \;x_{\,1} \;,\; x_{\,2} \;\in\; X \;\;\&\;\; y_{\,1} \;,\; y_{\,2} \;\in\; Y\]
$ {\hspace{.1cm}} \;\Leftrightarrow\; \left\{\,x_{\,1} \;,\; x_{\,2}\,\right\}$\; and \;$\left\{\,y_{\,1} \;,\; y_{\,2}\,\right\}$\; are linearly dependent in \,$X \;\&\; Y$\\
$ {\hspace{.1cm}} \;\Leftrightarrow\; (\,x_{\,1} \;,\; y_{\,1}\,) \;,\; (\,x_{\,2} \;,\; y_{\,2}\,) $\; are linearly dependent in \;$X \;\times\; Y$\,.
\item[(\,N\,2\,)]\;\; Now, 
\[\left\|\,(\,x_{\,1} \,,\, y_{\,1}\,) \;,\; (\,x_{\,2} \,,\, y_{\,2}\,)\,\right\| \;\,=\,\; \left\|\, x_{\,1} \;,\; x_{\,2} \,\right\|_{X} \;+\; \left\|\, y_{\,1} \;,\; y_{\,2}\,\right\|_{Y} \]
\[ \hspace{4.5cm}\;\,=\,\; \|\,x_{\,2} \;,\; x_{\,1} \,\|_{\,X} \;+\; \|\, y_{\,2} \;,\; y_{\,1}\,\|_{\,Y} \]
\[ \hspace{4.1cm}\;\,=\,\; \left\|\,(\,x_{\,2} \;,\; y_{\,2}\,) \;,\; (\,x_{\,1} \;,\; y_{\,1}\,)\,\right\|\]
for all \,$(\,x_{\,1} \;,\; y_{\,1}\,) \;,\; (\,x_{\,2} \;,\; y_{\,2}\,) \;\in\; (\,X \,\times\, Y\,)$\,.
\item[(\,N\,3\,)]\;\; Let \,$\alpha \;\in\; \mathbb{K}$, then
\[\left\|\, \;\alpha\; (\,x_{\,1} \;,\; y_{\,1}\,) \;,\;(\,x_{\,2} \;,\; y_{\,2}\,) \,\right\| \;\,=\,\; \left\|\,(\,\; \alpha\; x_{\,1} \;,\;  \;\alpha\; y_{\,1}\,) \;,\; (\,x_{\,2} \;,\; y_{\,2}\,) \,\right\| \]
\[\hspace{5.3cm} \;\,=\,\; \left\|\, \;\alpha\; x_{\,1} \;,\; x_{\,2}\,\right\|_{X} \;+\; \left\|\;\alpha\; y_{\,1} \;,\; y_{\,2}\,\right\|_{Y}\]
\[\hspace{6cm} \;\,=\,\; |\;\alpha\;|\, \left\|\,x_{\,1} \;,\; x_{\,2}\,\right\|_{X} \;+\; |\;\alpha\;|\,\left\|\,y_{\,1} \;,\; y_{\,2}\,\right\|_{Y}\]
\[\hspace{5.5cm} \;\,=\,\; |\;\alpha\;|\,\left(\,\left\|\,x_{\,1} \;,\; x_{\,2}\,\right\|_{X} \;+\; \left\|\,y_{\,1} \;,\; y_{\,2}\,\right\|_{Y}\, \right)\]
\[\hspace{4.7cm} \;\,=\,\; |\, \;\alpha\;|\,\left\|\,(\,x_{\,1} \;,\; y_{\,1}\,) \;,\; (\,x_{\,2} \;,\; y_{\,2}\,)\,\right\|\]
${\hspace{1.5cm}}$ for every \;$(\,x_{\,1} \;,\; y_{\,1}\,) \;,\; (\,x_{\,2} \;,\; y_{\,2}\,) \;\in\; (\,X \,\times\, Y\,)$\,.
\item[(\,N\,4\,)]\;\;
\[\left\|\,(\,x_{\,1} \,,\, y_{\,1}\,) \,+\, (\,x_{\,2} \,,\, y_{\,2}\,) \;,\; (\,x_{\,3} \,,\, y_{\,3}\,)\,\right\|  \,=\, \left\|\,(\,x_{\,1} \,+\, x_{\,2} \;,\; y_{\,1} \,+\, y_{\,2}\,) \;,\; (\,x_{\,3} \;,\; y_{\,3}\,)\,\right\| \]
\[ \;\,=\,\; \left\|\,x_{\,1} \;+\; x_{\,2} \;,\; x_{\,3}\,\right\|_{X} \;+\; \left\|\,y_{\,1} \;+\; y_{\,2} \;,\; y_{\,3}\,\right\|_{Y}\hspace{.3cm}\]
\[ {\hspace{2.5cm}} \;\leq\; \left\|\,x_{\,1} \;,\; x_{\,3}\,\right\|_{X} \;+\; \left\|\,x_{\,2} \;,\; x_{\,3}\,\right\|_{X} \;+\; \left\|\,y_{\,1} \;,\; y_{\,3}\,\right\|_{Y} \;+\; \left\|\,y_{\,2} \;,\; y_{\,3}\,\right\|_{Y}\]
\[{\hspace{2.5cm}} \,\;=\,\; \left\|\,x_{\,1} \;,\; x_{\,3}\,\right\|_{X} \;+\; \left\|\,y_{\,1} \;,\; y_{\,3}\,\right\|_{Y} \;+\; \left\|\,x_{\,2} \;,\; x_{\,3}\,\right\|_{X} \;+\; \left\|\,y_{\,2} \;,\; y_{\,3}\,\right\|_{Y}\]
\[ {\hspace{2cm}} \;\,=\,\; \left\|\,(\,x_{\,1} \;,\; y_{\,1}\,) \;,\; (\,x_{\,3} \;,\; y_{\,3}\,)\,\right\| \;+\; \left\|\,(\,x_{\,2} \;,\; y_{\,2}\,) \;,\; (\,x_{\,3} \;,\; y_{\,3}\,)\,\right\|\]
for every \;$(\,x_{\,1} \;,\; y_{\,1}\,) \;,\; (\,x_{\,2} \;,\; y_{\,2}\,) \;,\; (\,x_{\,3} \;,\; y_{\,3}\,) \;\in\; \left(\,X \,\times\, Y\,\right)$\,.
\end{itemize}
Thus, \,$\left(\,X \;\times\; Y \;,\; \|\,\cdot \;,\; \cdot \;\|\; \right)$\; becomes a linear 2-normed space\,.
\end{proof}

\textit{Second part}\,: \;Let \,$\left\{\,(\,x_{\,n} \;,\; y_{\,n}\,)\,\right\}$\; be a Cauchy sequence in \;$X \,\times\, Y$\,.\;Then 
\[ \lim\limits_{n \,,\, m \,\to\, \infty} \left\|\,(\,x_{\,n} \;,\; y_{\,n}\,) \;-\; (\,x_{\,m} \;,\; y_{\,m}\,) \;,\; (\,z \;,\; t\,)\,\right\| \;=\; 0 \;\; \;\forall\; (\,z \;,\; t\,) \;\in\; X \,\times\, Y \]
\[ \Rightarrow\; \lim\limits_{n \,,\, m \,\to\, \infty} \left\|\,(\,x_{\,n} \;-\; x_{\,m} \;,\; y_{\,n} \;-\; y_{\,m}\,) \;,\; (\,z \;,\; t\,) \;\right\|  \;=\; 0  \;  \;\forall\; (\,z \,,\, t\,) \,\in\, X \,\times\, Y\]
\[\Rightarrow\; \lim\limits_{n \,,\, m \,\to\, \infty} \;\left(\,\left\|\,x_{\,n} \;-\; x_{\,m} \;,\; z\,\right\|_{X} \;+\; \left\|\,y_{\,n} \;-\; y_{\,m} \;,\; t\,\right\|_{Y}\,\right)\; \;=\; 0\;.\hspace{2cm}\]
Therefore,
\[\lim\limits_{ n \,,\,m \,\to\, \infty} \left\|\,x_{\,n} \,-\, x_{\,m} \,,\, z\,\right\|_{X} \,=\, 0 \; \;\forall\; z \,\in\, X\]and
\[\lim\limits_{ n \,,\,m  \,\to\, \infty}\,  \left\|\,y_{\,n} \,-\, y_{\,m} \,,\, t\,\right\|_{Y} \,=\, 0 \; \;\forall\; t \,\in\, Y\;.\]
This shows that \;$\left\{\,x_{\,n}\,\right\}$\; and \;$\left\{\,y_{\,n}\,\right\}$\; are Cauchy sequences in \,$X$\; and \,$Y$, respectively\,.\;Since \,$X$\, and \,$Y$\, are 2-Banach spaces, So there exists points \,$x \;\in\; X$\, and \,$y \;\in\; Y$\, such that \;$x_{\,n} \,\to\, x$\; in \,$X$\, and \;$y_{\,n} \,\to\, y$\; in \,$Y$\, and hence \;$\left(\,x_{\,n} \;,\; y_{\,n}\,\right) \,\to\, (\,x \;,\; y\,)$\; in \;$X \,\times\, Y$\,.\;Therefore, \,$X \,\times\, Y$\; is 2-Banach space\,.

\begin{theorem}
Let \,$X$\, and \,$Y$\, be two linear 2-normed spaces over the field \,$\mathbb{K}$\, and \,$D$\, be a subspace of \,$X$\,.\;Then the linear operator \,$T \,:\, D \,\to\, Y$\; is closed if and only if its Graph is a closed subspace of \,$X \,\times\, Y$\,.
\end{theorem}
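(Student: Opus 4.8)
The plan is to argue directly from the definition of a closed operator and from the characterization of a closed set via its closure (Definition in the Preliminaries), using the explicit form of the induced 2-norm on $X \times Y$ furnished by the previous theorem. Throughout, write $G(T) = \{\,(x, Tx) : x \in D\,\} \subseteq X \times Y$ for the graph of $T$.

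First I would record the part that does not involve closedness at all: $G(T)$ is always a linear subspace of $X \times Y$. This is immediate from the linearity of $T$ together with the fact that $D$ is a subspace, since $\alpha (x_1, Tx_1) + \beta (x_2, Tx_2) = (\alpha x_1 + \beta x_2,\, T(\alpha x_1 + \beta x_2)) \in G(T)$. So the substance of the theorem is the equivalence ``$T$ is closed $\iff$ $G(T)$ is a closed set''.

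The key preliminary observation I would establish next is that a sequence $(x_n, y_n)$ converges to $(x, y)$ in $(X \times Y, \|\cdot\,,\,\cdot\|)$ if and only if $x_n \to x$ in $X$ and $y_n \to y$ in $Y$. Indeed, by the definition of the induced 2-norm one has $\|(x_n - x,\, y_n - y),\,(z,t)\| = \|x_n - x,\, z\|_X + \|y_n - y,\, t\|_Y$, and since both summands are nonnegative, the left-hand side tends to $0$ for every $(z,t) \in X \times Y$ exactly when $\|x_n - x,\, z\|_X \to 0$ for every $z \in X$ and $\|y_n - y,\, t\|_Y \to 0$ for every $t \in Y$: for the forward direction one fixes the $z$ of interest, chooses any $t$, and drops the (nonnegative) $Y$-term; the reverse direction is just the sum of two null sequences. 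This is essentially the computation already carried out in the second part of the preceding theorem.

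With this in hand, both implications are short. For the forward direction, assume $T$ is closed and let $(x, y) \in \overline{G(T)}$; choose a sequence $(x_n, Tx_n)$ in $G(T)$ converging to $(x, y)$, deduce $x_n \to x$ in $X$ and $Tx_n \to y$ in $Y$ from the observation, and conclude $x \in D$ and $Tx = y$ from closedness of $T$, so that $(x, y) = (x, Tx) \in G(T)$; hence $\overline{G(T)} \subseteq G(T)$ and $G(T)$ is closed. For the converse, assume $G(T)$ is closed and let $\{x_n\}$ be a sequence in $D$ with $x_n \to x$ in $X$ and $Tx_n \to y$ in $Y$; then $(x_n, Tx_n) \to (x, y)$ in $X \times Y$ by the observation, and since each $(x_n, Tx_n) \in G(T)$ we get $(x, y) \in \overline{G(T)} = G(T)$, which says precisely $x \in D$ and $Tx = y$, so $T$ is closed. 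I do not expect a genuine obstacle; the only point requiring a little care is the forward direction of the convergence equivalence, where one must genuinely use the nonnegativity of the 2-norm to separate the two coordinates rather than invoke any informal ``the projections are continuous'' claim.
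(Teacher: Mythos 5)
Your proposal is correct and follows essentially the same route as the paper: both arguments unpack convergence in the induced 2-norm on $X \times Y$ into componentwise convergence (using nonnegativity of the two summands) and then apply the definitions of closed operator and closed set in each direction. The only difference is cosmetic — you isolate the componentwise-convergence fact as a standalone observation and explicitly verify that the graph is a subspace, whereas the paper performs the same norm computation inline in each direction.
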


\begin{proof}
First we suppose that \,$T \,:\, D \,\to\, Y$\; is closed operator, that is the relation \,$x_{\,n} \,\in\, D \;,\; x_{\,n} \,\to\, x\, \;\text{in}\, \,X \;,\; T\,x_{\,n} \,\to\, y\; \;\text{in}\; \;Y$\; implies that \,$x \,\in\, D$\, and \,$T\,x  \,=\, y$\,.\;We shall prove that the graph \;$G_{\,T} \,=\, \left\{\,(\,x \,,\, T\,x\,) \,:\, x \,\in\, D\,\right\}$\; is closed in linear 2-normed space \;$X \,\times\, Y$\,.\;Let \,$\left\{\,(\,x_{\,n} \,,\, T\,x_{\,n}\,)\,\right\} \,\subseteq\, G_{\,T} \;,\; x_{\,n} \,\in\, D$\; and \,$\left(\,x_{\,n} \,,\, T\,x_{\,n}\,\right) \,\to\, (\,x \,,\, y\,)$\; as \,$n \,\to\, \infty$\,.\;Therefore,
\[ \lim\limits_{n \to \infty} \left\|\,\left(\,x_{\,n} \,,\, T\,x_{\,n}\,\right) \,-\, (\,x \,,\, y\,) \;,\; (\,z \,,\, t\,) \,\right\| \,=\, 0\; \;\;\forall\; (\,z \,,\, t) \,\in\, X \,\times\, Y \]
\[ \Rightarrow\; \lim\limits_{n \to \infty} \left\|\,\left(\,x_{\,n} \,-\, x \,,\, T\,x_{\,n} \,-\, y\,\right) \;,\; (\,z \,,\, t\,) \,\right\| \,=\, 0\; \;\;\forall\; (\,z \,,\, t) \,\in\, X \,\times\, Y \]
\[ {\hspace{1.2cm}} \Rightarrow\; \lim\limits_{n \to \infty} \left(\, \left\|\, x_{\,n} \,-\, x \;,\; z \,\right\|_{X} \,+\, \left\|\, T\,x_{\,n} \,-\, y \;,\; t \,\right\|_{Y}\,\right) \,=\, 0\;\; \;\forall\; (\,z \,,\, t) \,\in\, X \,\times\, Y \;.\]Thus,
\[\lim\limits_{n \to \infty} \left\|\, x_{\,n} \,-\, x \;,\; z \,\right\|_{X} \,=\, 0\;\; \;\forall\; z \,\in\, X\]and
\[\lim\limits_{n \to \infty} \left\|\,T\,x_{\,n} \,-\, y \;,\; t \,\right\|_{\,Y} \,=\, 0\;\; \;\forall\; t \,\in\, Y\;.\] 
This shows that \,$ x_{\,n} \,\to\, x $\; and \;$ T\,x_{\,n} \,\to\, y $\; as \,$n \,\to\, \infty $\,.\;Since \,$T$\, is closed operator, We have \,$x \,\in\, D $\, and \,$ T\,x \,=\, y $\; and therefore \,$(\, x \,,\, y \,) \,=\, (\, x \,,\, T \,x \,) \,\in\, G_{\,T}$\,.\;Hence, \,$ G_{\,T} $\; is closed subspace of linear 2-normed space \,$X \,\times\, Y$\,.\\\textit{Conversely}\,, Suppose \,$G_{\,T}$\, is closed subspace of linear 2-normed space \,$X \,\times\, Y $\,.\;To prove \,$T$\; is closed operator, we consider \,$x_{\,n} \,\to\, x \;,\; x_{\,n} \,\in\, D$\; and \,$T \,x_{\,n} \,\to\, y $\,.\;Now,
\[\left\|\,\left(\, x_{\,n} \,,\, T\,x_{\,n}\,\right) \,-\, (\,x \,,\, y \,) \;,\; (\, z \,,\, t \,)\,\right\| \,=\, \left\|\,\left(\, x_{\,n} \,-\, x \;,\; T\,x_{\,n} \,-\, y \,\right) \;,\; (\, z \,,\, t \,)\,\right\|\,\]
\begin{equation}\label{eq1}
\;=\; \left\|\, x_{\,n} \;-\; x \;,\; z \,\right\|_{X} \;+\; \left\|\, T\,x_{\,n} \;-\; y \;,\; t \,\right\|_{Y}\;. 
\end{equation} 
Since \,$x_{\,n} \,\to\, x $\; and \,$ T\,x_{\,n} \,\to\, y $\; as \,$n \,\to\, \infty$\,, then
\[\lim\limits_{n \to \infty} \left\|\, x_{\,n} \,-\, x \,,\, z \,\right\|_{X} \,=\, 0\; \;\forall\; z \,\in\, X\]and
\[\lim\limits_{n \to \infty} \left\|\, T\,x_{\,n} \,-\, y \,,\, t \,\right\|_{Y} \,=\, 0\; \;\forall\; t \,\in\, Y\;.\]
So by (\ref{eq1}), 
\[\lim\limits_{n \to \infty} \left\|\,\left(\,x_{\,n} \,,\, T\,x_{\,n} \,\right) \,-\, (\, x \,,\, y \,) \;,\; (\, z \,,\, t\,) \,\right\| \,=\, 0\;\; \;\forall\; (\, z \,,\, t \,) \,\in\, X \,\times\, Y\;.\] 
This shows that \,$(\, x_{\,n} \,,\, T\,x_{\,n} \,) \,\to\, (\, x \,,\, y \,)$\, as \;$ n \,\to\, \;\infty $\,.\;Since \,$G_{\,T}$\, is closed subspace of linear 2-normed space \;$X \,\times\, Y $\,, it follows that \,$(\, x \,,\, y \,) \,\in\, G_{\,T}$\,, that is\,, \,$ x \,\in\, D$\; and \;$y \,=\, T\,x $\,. Hence, \,$T$\, is closed linear operator\,.
\end{proof}

%=====================================
\section{Some properties related to b-linear functional}
%=====================================

In this section we define different types of continuity of b-linear functional and give some characterizations between them in linear 2-normed spaces\,.

\begin{theorem}\label{th3}
Let \,$X$\; be a linear 2-normed space\,.\;Then 
\[ \left |\, \|\, x \,,\, z \,\| \,-\, \|\, y \,,\, z \,\| \,\right |\;\leq\; \|\, x \,-\, y \;,\; z \,\|\; \;\forall\; x \;,\; y \;,\; z \;\in\; X \;.\]
\begin{proof}
Take \,$x \;,\; y \;,\; z \;\in\; X $\,. Then 
\[\|\, x \,,\, z \,\| \;\,=\,\; \|\, x \;-\; y \;+\; y \;,\; z \,\| \;\leq\; \|\, x \;-\; y \;,\; z \,\| \;+\; \|\, y \;,\; z \,\|\]
\[ \;\Rightarrow\; \| x \;,\; z \,\| \;-\; \|\, y \;,\; z \,\| \;\leq\; \|\, x \;-\; y \;,\; z \,\|\;. \hspace{.5cm}\]
Also, interchanging \,$x$\, and \,$y$\,, we get that 
\[\;-\; \left(\; \| x \;,\; z \,\| \;-\; \|\, y \;,\; z \,\|\;\right) \;\leq\; \|\, y \;-\; x \;,\; z \,\| \;\,=\,\; \|\, x \;-\; y \;,\; z \,\|\;.\]
Combining the above two inequality the result follows\,.
\end{proof}
\end{theorem}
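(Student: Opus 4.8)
The plan is to mimic the classical proof of the reverse triangle inequality for norms, using only the triangle inequality for the 2-norm, i.e.\ axiom $(N4)$, together with symmetry $(N2)$ and the homogeneity $(N3)$ that gives $\|\,-u \;,\; v\,\| \,=\, \|\,u \;,\; v\,\|$.

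First I would fix arbitrary $x \,,\, y \,,\, z \,\in\, X$ and write $x \,=\, (x \,-\, y) \,+\, y$, so that applying $(N4)$ in the first slot yields $\|\,x \;,\; z\,\| \,\leq\, \|\,x \,-\, y \;,\; z\,\| \,+\, \|\,y \;,\; z\,\|$, hence $\|\,x \;,\; z\,\| \,-\, \|\,y \;,\; z\,\| \,\leq\, \|\,x \,-\, y \;,\; z\,\|$. Next I would interchange the roles of $x$ and $y$ to obtain $\|\,y \;,\; z\,\| \,-\, \|\,x \;,\; z\,\| \,\leq\, \|\,y \,-\, x \;,\; z\,\|$, and then note that by $(N3)$ with $\alpha \,=\, -1$ we have $\|\,y \,-\, x \;,\; z\,\| \,=\, \|\,x \,-\, y \;,\; z\,\|$, so this second inequality says $-\bigl(\|\,x \;,\; z\,\| \,-\, \|\,y \;,\; z\,\|\bigr) \,\leq\, \|\,x \,-\, y \;,\; z\,\|$. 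Combining the two displayed bounds gives $\bigl|\,\|\,x \;,\; z\,\| \,-\, \|\,y \;,\; z\,\|\,\bigr| \,\leq\, \|\,x \,-\, y \;,\; z\,\|$, which is the claim.

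There is no real obstacle here; the only point worth a moment's care is that the triangle inequality $(N4)$ as stated in Definition~1 is written with the sum occurring in the \emph{second} argument, so to split $x \,=\, (x-y) \,+\, y$ in the first argument I would first invoke $(N2)$ to move everything into the second slot (or simply remark that $(N4)$ together with $(N2)$ immediately yields the first-slot version $\|\,u \,+\, v \;,\; w\,\| \,\leq\, \|\,u \;,\; w\,\| \,+\, \|\,v \;,\; w\,\|$). Everything else is a one-line manipulation, and the argument is valid over either $\mathbb{R}$ or $\mathbb{C}$ since only $|-1| \,=\, 1$ is used.
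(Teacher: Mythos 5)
Your proposal is correct and follows essentially the same argument as the paper: decompose $x = (x-y)+y$, apply the triangle inequality to get one direction, interchange $x$ and $y$ and use $\|\,y-x\,,\,z\,\| = \|\,x-y\,,\,z\,\|$ for the other, then combine. Your extra remark about invoking $(N2)$ to transfer $(N4)$ to the first slot is a small point of care the paper leaves implicit, but it does not change the argument.
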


\begin{theorem}\label{th4}
Let \,$T$\, be a bounded b-linear functional on \,$ X \,\times\, \left <\,b\,\right > $, where \,$\left <\,b\,\right > $\; is the subspace of the linear 2-normed space \,$X$\, generated by a fixed \,$b \,\in\, X$\,.\;Then 
\[\left |\, T\,(\, x \,,\, b \,) \,-\, T\,(\, y \,,\, b \,) \,\right |\, \;\leq\; \|\, T \,\| \,\|\, x \,-\, y \;,\; b \,\|\;\; \;\forall\; x \;,\; y \,\in\, X \;.\]
\end{theorem}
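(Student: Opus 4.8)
The plan is to deduce the inequality from a single application of the boundedness of $T$, using only the linearity of $T$ in its first slot. Throughout, $\langle b\rangle$ is fixed, so the relevant data are properties $(1)$ and $(2)$ of a b-linear functional together with the estimate $|\,T(u,b)\,| \le \|T\|\,\|u,b\|$ recorded at the end of the definition, which holds for every $u \in X$.

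First I would fix $x, y \in X$ and apply additivity (property $(1)$ to the decomposition $x = (x - y) + y$), obtaining
\[ T(x, b) \;=\; T(x - y, b) \;+\; T(y, b), \]
so that $T(x,b) - T(y,b) = T(x-y,b)$. Note that $x - y \in X$, so this step is legitimate; homogeneity (property $(2)$) is not even needed here, although one could alternatively argue via $T(-y,b) = -T(y,b)$.

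Next I would take absolute values and invoke boundedness with $u = x - y$, namely $|\,T(x-y,b)\,| \le \|T\|\,\|x-y,b\|$. Combining this with the identity just established gives
\[ \left|\, T(x,b) - T(y,b) \,\right| \;=\; \left|\, T(x-y,b) \,\right| \;\le\; \|T\|\,\|x-y,b\|, \]
which is the claim, since $x$ and $y$ were arbitrary.

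The argument has no real obstacle: it is a one-line consequence of linearity and the norm bound, essentially the 2-normed analogue of the elementary fact that a bounded linear functional is Lipschitz. The only point meriting a moment's care is that the estimate $|\,T(u,b)\,| \le \|T\|\,\|u,b\|$ is valid for all $u \in X$ (it is the final clause of the definition of a bounded b-linear functional), so its use with $u = x - y$ requires no additional hypothesis.
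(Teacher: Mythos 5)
Your proposal is correct and follows essentially the same route as the paper: both reduce the difference to $T(x-y,b)$ via linearity (the paper writes $T(x,b)+T(-y,b)=T(x-y,b)$, you use the decomposition $x=(x-y)+y$, a trivial variation) and then apply the bound $|T(u,b)|\le\|T\|\,\|u,b\|$ with $u=x-y$. No gap.
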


\begin{proof}
For each \,$x \,,\, y \,\in\, X$ 
\[|\, T\,(\, x \,,\, b \,) \,-\, T\,(\, y \,,\, b \,) \,| \,=\, \left|\, T\,(\, x \,,\, b \,) \,+\, T\,(\,-\, y \,,\, b \,) \,\right| \hspace{2cm}\]
\[{\hspace{4cm}} \,=\, \left|\, T\,(\, x \,-\, y \,,\, b \,)\,\right| \,\leq\, \|\, T \,\| \,\left\|\, x \,-\, y \,,\, b \,\right\|\;.\]
\end{proof}

\begin{definition}
Let \,$T$\, be a b-linear functional defined on \,$ X \,\times\, \left <\,b\,\right > $\,.\;Then \,$T$\, is said to be b-sequentially continuous at \,$x \,\in\, X $\; if for every sequence \,$\left\{\,x_{\,n}\,\right\}$\; converging to \,$x$\, in \,$X$, we have \,$\left\{\,T\,(\, x_{\,n} \,,\, b \,)\,\right\}$\; converging to \,$T\,(\, x \,,\, b \,)$\; in \,$\mathbb{K}$\,.
\end{definition}

\begin{theorem}\label{th5}
Let \,$X$\, be a linear 2-normed space over \,$\mathbb{K}$\, and \,$b \,\in\, X $\, be fixed\,.\;Then every bounded b-linear functional defined on \,$X \,\times\, \left <\,b\,\right > $\; is b-sequentially continuous\,.
\end{theorem}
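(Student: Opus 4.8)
The plan is to reduce continuity of $T$ to the quantitative Lipschitz-type estimate already established in Theorem~\ref{th4}. Since $T$ is a bounded b-linear functional on $X \times \langle b\rangle$, we have at our disposal the inequality
\[
\left|\, T\,(\, x \,,\, b \,) \,-\, T\,(\, y \,,\, b \,) \,\right|\, \;\leq\; \|\, T \,\|\, \|\, x \,-\, y \;,\; b \,\|\;\; \;\forall\; x \,,\, y \,\in\, X\;.
\]
So the essential point is simply to feed a convergent sequence into this bound and observe that the right-hand side is forced to zero.

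First I would fix an arbitrary $x \,\in\, X$ and an arbitrary sequence $\{\,x_{\,n}\,\}$ in $X$ with $x_{\,n} \,\to\, x$, which by the definition of convergence in a linear 2-normed space means $\lim_{n\to\infty}\|\,x_{\,n} \,-\, x \;,\; y\,\| \,=\, 0$ for \emph{every} $y \,\in\, X$; in particular this holds for the fixed vector $y \,=\, b$. Next I would apply Theorem~\ref{th4} with $y$ replaced by $x_{\,n}$, giving
\[
\left|\, T\,(\, x_{\,n} \,,\, b \,) \,-\, T\,(\, x \,,\, b \,) \,\right|\, \;\leq\; \|\, T \,\|\, \|\, x_{\,n} \,-\, x \;,\; b \,\|\;.
\]
Letting $n \,\to\, \infty$, the right-hand side tends to $0$ because $\|\,x_{\,n} \,-\, x \;,\; b\,\| \,\to\, 0$ and $\|\,T\,\|$ is a fixed finite constant, so by the squeeze principle $\left|\, T\,(\, x_{\,n} \,,\, b \,) \,-\, T\,(\, x \,,\, b \,) \,\right| \,\to\, 0$, i.e.\ $T\,(\, x_{\,n} \,,\, b \,) \,\to\, T\,(\, x \,,\, b \,)$ in $\mathbb{K}$. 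Since $x$ and $\{\,x_{\,n}\,\}$ were arbitrary, $T$ is b-sequentially continuous at every point of $X$, which is the claim.

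Honestly, there is no serious obstacle here: the work has all been front-loaded into Theorem~\ref{th4}, and the present statement is essentially a corollary. The only thing to be a little careful about is that convergence of $\{\,x_{\,n}\,\}$ in the 2-normed sense is a statement about \emph{all} test vectors $y$, but we only need the single instance $y \,=\, b$; and one should note $\|\,T\,\| \,<\, \infty$ by hypothesis (it is a \emph{bounded} b-linear functional), so multiplying the null sequence $\|\,x_{\,n}\,-\,x\,,\,b\,\|$ by it still yields a null sequence. No appeal to finite-dimensionality (Theorem~\ref{th2}) or to completeness is needed.
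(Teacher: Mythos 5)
Your proposal is correct and follows exactly the paper's own argument: both fix a sequence $x_{\,n} \,\to\, x$, specialize the 2-norm convergence to the test vector $b$, and apply the Lipschitz estimate of Theorem~\ref{th4} to conclude $T\,(\,x_{\,n}\,,\,b\,) \,\to\, T\,(\,x\,,\,b\,)$. No differences worth noting.
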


\begin{proof}:
Let \,$T$\, be a bounded b-linear functional on \,$X \,\times\, \left <\,b\,\right >$\, and \,$\{\,x_{\,n}\,\}$\; be a sequence converging to \,$x$\, in \,$X$\,.\;Then, 
\[\lim\limits_{n \to \infty}\, \left \|\, x_{\,n} \,-\, x \;,\; z \,\right \| \,=\, 0 \; \;\forall\; z \;\in\; X \;,\] and for particular \,$z \,=\, b$, we can write, \,$\lim\limits_{n \to \infty}\, \left\|\, x_{\,n} \,-\, x \;,\; b \,\right\| \;\,=\,0$\,.\;Now, using Theorem (\ref{th4}), by putting \,$x \,=\, x_{\,n}$\; and \,$y \,=\, x$\,, we can write
\[ \left|\, T\,(\, x_{\,n} \,,\, b \,) \,-\, T\,(\, x \,,\, b \,)\,\right| \;\leq\; \|\, T \,\|\, \left\|\, x_{\,n} \,-\, x \;,\; b \,\right\| \]
\[\;\Rightarrow\; \lim\limits_{n \to \infty}\,  \left|\, T\,(\, x_{\,n} \,,\, b \,) \,-\, T\,(\, x \,,\, b \,)\,\right| \;\leq\; \|\, T \,\|\, \lim\limits_{ n \to \infty} \,\left\|\, x_{\,n} \,-\, x \;,\; b \,\right\| \]
\[\Rightarrow\, \lim\limits_{n \to \infty}\,  \left|\, T\,(\, x_{\,n} \,,\, b \,) \,-\, T\,(\, x \,,\, b \,)\,\right| \;=\; 0\;. \hspace{3.8cm}\] 
Therefore, \,$\left\{\,T\,(\, x_{\,n} \,,\, b \,)\,\right\}$\; converging to \,$T\,(\,x \,,\, b \,)$\; in \,$\mathbb{K}$\,.\;Hence, \,$T$\, is b-sequentially continuous\,.
\end{proof}

\begin{definition}
Let \,$X$\, be a linear 2-normed space and \,$b \,\in\, X$\, be fixed\,.\;Then a b-linear functional \,$T \,:\, X \,\times\, \left <\,b\,\right > \,\to\, \mathbb{K}$\; is said to be continuous at \,$x_{\,0} \;\in\; X $\; if for any open ball \,$ B\,\left(\, T\,(\, x_{\,0} \,,\, b \,) \;,\; \epsilon\,\,\right)$\; in \,$\mathbb{K}$, there exist an open ball $B_{\,e}\,(\, x_{\,0} \,,\, \delta \,)$\; in \,$X$\, such that
\[T\,\left(\, B_{\,e}\,(\, x_{\,0} \,,\, \delta \,) \,,\, b \,\right) \,\subseteq\, B\, \left(\; T\,(\, x_{\,0} \,,\, b \,) \,,\, \epsilon \,\right)\;.\]
Equivalently, for a given \,$\epsilon \,>\, 0$, there exist some \;$e \;\in\; X$\; and \,$\delta \;>\; 0$\, such that 
\[ x \;\in\; X \;,\; \left\|\, x \,-\, x_{\,0} \,,\, e \,\right\| \;<\; \delta \;\Rightarrow\; \left|\, T\,(\, x \,,\, b \,) \,-\, T\,(\, x_{\,0} \,,\, b \,) \,\right| \;<\; \epsilon\;.\]
\end{definition}

\begin{theorem}
Let \,$X$\, be a linear 2-normed space and \,$b \,\in\, X$\; be fixed\,.\;If a b-linear functional \,$T$\, on \,$X \,\times\, \left <\,b\,\right >$\; is continuous at \,$0$\, then it is continuous on the whole space \,$X$\,.
\end{theorem}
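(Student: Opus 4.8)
The plan is to exploit the translation-invariance of the ball structure together with the linearity of $T$ in its first slot, reducing continuity at an arbitrary point to continuity at $0$. First I would record the elementary fact that $T(0,b)=0$: this follows from property $(2)$ in the definition of a b-linear functional, since $T(0,b)=T(0\cdot x,b)=0\cdot T(x,b)=0$. Now fix an arbitrary $x_{0}\in X$ and let $\epsilon>0$ be given; the goal is to produce $e\in X$ and $\delta>0$ witnessing continuity of $T$ at $x_{0}$ for this $\epsilon$.

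Next I would invoke the hypothesis that $T$ is continuous at $0$: for the given $\epsilon>0$ there exist $e\in X$ and $\delta>0$ such that
\[
x\in X,\quad \left\|\,x-0\,,\,e\,\right\|<\delta\;\Longrightarrow\;\left|\,T(\,x\,,\,b\,)-T(\,0\,,\,b\,)\,\right|<\epsilon,
\]
which, since $T(0,b)=0$, simplifies to: $\|\,u\,,\,e\,\|<\delta$ implies $|\,T(u,b)\,|<\epsilon$. I claim the \emph{same} $e$ and $\delta$ work at $x_{0}$. Indeed, suppose $x\in X$ with $\left\|\,x-x_{0}\,,\,e\,\right\|<\delta$, and put $u=x-x_{0}$. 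Then $\|\,u\,,\,e\,\|<\delta$, so $|\,T(u,b)\,|<\epsilon$. On the other hand, by additivity and homogeneity of $T$ in the first argument,
\[
T(\,x\,,\,b\,)-T(\,x_{0}\,,\,b\,)\;=\;T(\,x-x_{0}\,,\,b\,)\;=\;T(\,u\,,\,b\,),
\]
hence $\left|\,T(\,x\,,\,b\,)-T(\,x_{0}\,,\,b\,)\,\right|=|\,T(u,b)\,|<\epsilon$. This is exactly the $\epsilon$-$\delta$ formulation of continuity of $T$ at $x_{0}$ given in the preceding definition, so $T$ is continuous at $x_{0}$; since $x_{0}\in X$ was arbitrary, $T$ is continuous on all of $X$.

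I do not expect a genuine obstacle here: the argument is the standard "a continuous linear map is continuous everywhere once it is continuous at the origin" trick, and the only points requiring a moment's care are (i) verifying $T(0,b)=0$ from the defining axioms rather than taking it for granted, and (ii) observing that $B_{e}(x_{0},\delta)$ is precisely the translate $x_{0}+B_{e}(0,\delta)$, so that the choice of $e,\delta$ coming from continuity at $0$ transfers verbatim. If one prefers the open-ball phrasing of continuity instead of the $\epsilon$-$\delta$ phrasing, the same computation shows $T\big(B_{e}(x_{0},\delta),b\big)\subseteq B\big(T(x_{0},b),\epsilon\big)$, completing the proof.
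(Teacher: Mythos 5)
Your proof is correct and follows essentially the same route as the paper: both use $T(0,b)=0$, take the $e$ and $\delta$ furnished by continuity at $0$, and transfer them verbatim to an arbitrary $x_{0}$ via the identity $T(x,b)-T(x_{0},b)=T(x-x_{0},b)$ together with the translation $B_{e}(x_{0},\delta)=x_{0}+B_{e}(0,\delta)$. The only difference is cosmetic — you phrase it in $\epsilon$--$\delta$ form while the paper works with the open-ball inclusion, an equivalence you yourself note at the end.
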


\begin{proof}
Let \,$T \,:\, X \,\times\, \left <\,b\,\right > \,\to\, \mathbb{K}$\; be a b-linear functional which is continuous at \,$0$\; and \,$x_{\,0} \;\in\; X$\; be arbitrary\,.\;Then for any open ball \,$ B\,(\, 0 \,,\, \epsilon \,)$\; in \;$\mathbb{K}$, we can find an open ball \;$ B_{\,e}\,(\, 0 \,,\, \delta \,)$\; in \;$X$\; such that 
\[ T\, \left(\, B_{\,e}\,(\, 0 \,,\, \delta \,) \,,\, b \,\right) \,\subseteq\, B\,(\, T\,(\, 0 \,,\, b\,) \,,\, \epsilon \,) \,=\, B\,(\, 0 \,,\, \epsilon \,)\; \;[\;\because\; T\,(\, 0 \,,\, b \,) \,=\, 0\;]\;.\] 
Then, 
\[ T\,(\, x \,,\, b \,) \,-\, T\,(\, x_{\,0} \,,\, b \,) \,=\, T\,(\,x \,-\, x_{\,0} \,,\, b\,) \,\in\, B\,(\, 0 \,,\, \epsilon \,)\;,\] whenever \,$ x \,-\, x_{\,0} \,\in\, B_{\,e}\,(\, 0 \,,\, \delta \,)$\,.\;Thus\,, if \,$x \,\in\, x_{\,0} \,+\, B_{\,e}\,(\, 0 \,,\, \delta \,) \,=\, B_{\,e}\,(\, x_{\,0} \,,\, \delta \,)$, then 
\[ T\,(\, x \,,\, b \,) \;\in\; T\,(\, x_{\,0} \,,\, b \,) \,+\, B\,(\, 0 \,,\, \epsilon \,) \,=\, B\, \left(\, T\,(\, x_{\,0} \,,\, b \,) \,,\, \epsilon \,\right)\;.\]Therefore, 
\[T\,\left(\, B_{\,e}\,(\, x_{\,0} \,,\, \delta \,) \,,\, b \,\right) \,\subseteq\, B\, \left(\, T\,(\, x_{\,0} \,,\, b \,) \,,\, \epsilon \,\right)\;.\]
Since \,$x_{\,0}$\, is arbitrary element of \,$X$, So \,$T$\, is continuous on \,$X$\,.
\end{proof}

\begin{theorem}
Let \,$X$\, be a linear 2-normed space\,.\;Then every continuous b-linear functional defined on \,$X \,\times\, \left <\,b\,\right >$\; is b-sequentially continuous\,.
\end{theorem}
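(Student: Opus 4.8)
The plan is to unwind the $\epsilon$--$\delta$ definition of continuity and feed into it the fact that convergence in a linear 2-normed space is convergence against \emph{every} second argument. First I would fix a b-linear functional $T$ on $X \,\times\, \langle\, b\,\rangle$ that is continuous (hence continuous at each point of $X$), an arbitrary $x \,\in\, X$, and a sequence $\{\,x_{\,n}\,\}$ in $X$ with $x_{\,n} \,\to\, x$. The goal is to show $T\,(\,x_{\,n} \,,\, b\,) \,\to\, T\,(\,x \,,\, b\,)$ in $\mathbb{K}$, i.e. that for every $\epsilon \,>\, 0$ there is an $N$ with $\left|\, T\,(\,x_{\,n} \,,\, b\,) \,-\, T\,(\,x \,,\, b\,)\,\right| \,<\, \epsilon$ whenever $n \,\geq\, N$.

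So fix $\epsilon \,>\, 0$. Applying the definition of continuity of $T$ at the point $x$, I obtain some $e \,\in\, X$ and some $\delta \,>\, 0$ (both depending on $\epsilon$ and on $x$) such that for all $y \,\in\, X$,
\[
\left\|\, y \,-\, x \;,\; e\,\right\| \;<\; \delta \;\;\Longrightarrow\;\; \left|\, T\,(\,y \,,\, b\,) \,-\, T\,(\,x \,,\, b\,)\,\right| \;<\; \epsilon \;.
\]
Now the key step: because $x_{\,n} \,\to\, x$ in the sense of Definition of convergence in a linear 2-normed space, we have $\lim\limits_{n \to \infty} \left\|\, x_{\,n} \,-\, x \;,\; z\,\right\| \,=\, 0$ for \emph{every} $z \,\in\, X$; in particular, taking $z \,=\, e$, there is an $N$ such that $\left\|\, x_{\,n} \,-\, x \;,\; e\,\right\| \,<\, \delta$ for all $n \,\geq\, N$. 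Combining this with the displayed implication (with $y \,=\, x_{\,n}$) gives $\left|\, T\,(\,x_{\,n} \,,\, b\,) \,-\, T\,(\,x \,,\, b\,)\,\right| \,<\, \epsilon$ for all $n \,\geq\, N$, which is exactly what we needed; hence $T$ is b-sequentially continuous at $x$, and since $x$ was arbitrary, on all of $X$.

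The only point requiring care — and what I would flag as the ``obstacle,'' though it is a mild one — is the order of quantifiers: the element $e$ supplied by continuity is chosen \emph{before} we look at the sequence, so a priori one might worry that the sequence converges against every $z$ except the relevant $e$. But that cannot happen, precisely because 2-norm convergence is defined to hold against all $z \,\in\, X$ simultaneously; so we are free to substitute the particular $e$ produced by the continuity hypothesis. No use of boundedness of $T$ or of Theorem~\ref{th4} is needed here — this is a strictly weaker route than the proof of Theorem~\ref{th5} — and the argument uses nothing about $\mathbb{K}$ beyond that balls in $\mathbb{K}$ are the usual $\epsilon$-balls.
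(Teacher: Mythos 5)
Your proof is correct and takes essentially the same route as the paper: the paper phrases continuity via the ball containment $T\,\left(\,B_{\,e}\,(\,x\,,\,\delta\,)\,,\,b\,\right) \,\subseteq\, B\,\left(\,T\,(\,x\,,\,b\,)\,,\,\epsilon\,\right)$ and then notes that $x_{\,n}$ eventually lies in $B_{\,e}\,(\,x\,,\,\delta\,)$, which is exactly your step of specializing the 2-norm convergence to $z \,=\, e$. Your remark about the quantifier order is the right point to flag, and it is handled identically (if more tacitly) in the paper's version.
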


\begin{proof}
Suppose that \,$T \,:\, X \,\times\, \left <\,b\,\right > \,\to\, \mathbb{K}$\; is continuous at \,$x \,\in\, X $\,.\;Then for any open ball \,$B\,\left(\, T\,(\, x \,,\, b \,) \,,\, \epsilon \,\right)$\; in \,$\mathbb{K}$, we can find an open ball $B_{\,e}\,(\, x \,,\, \delta \,)$\; in \,$X$\, such that 
\begin{equation}\label{eq3}
T\,\left(\, B_{\,e}\,(\, x \,,\, \delta \,) \,,\, b \,\right) \,\subseteq\, B\,\left(\, T\,(\, x \,,\, b \,) \,,\, \epsilon \,\right)\;. 
\end{equation} 
Let \,$\left\{\,x_{\,n}\,\right\}$\; be any sequence in \,$X$\, such that \,$x_{\,n} \,\to\, x $\; as \,$n \,\to\, \infty$\,.\;Then, for the open ball \,$B_{\,e}\,(\, x \,,\, \delta \,)$, there exist some \,$K \,>\, 0$\; such that \,$ x_{\,n} \,\in\, B_{\,e}\,(\, x \,,\, \delta \,)\; \;\forall\; n \,\geq\, K $\,.\;Now from (\ref{eq3}), it follows that
\[ T\,(\, x_{\,n} \,,\, b \,) \,\in\, B\,\left(\, T\,(\, x \,,\, b \,) \,,\, \epsilon \,\right)\; \;\forall\; n \,\geq\, K \]
\[\Rightarrow\; \left|\, T\,(\, x_{\,n} \,,\, b \,) \,-\, T\,(\, x \,,\, b \,)\,\right| \,<\, \epsilon \;\; \;\forall\; n \,\geq\, K\;.\] Since \,$B\,\left(\, T\,(\, x \,,\, b \,) \,,\, \epsilon \,\right)$\; is an arbitrary open ball in \,$\mathbb{K}$, it follows that \;$ T\,(\,x_{\,n} \,,\, b \,) \,\to\, T\,(\, x \,,\, b \,)$\; as \,$n \,\to\, \infty$\,.\;This shows that \,$T$\, is b-sequentially continuous on \,$X$\,.
\end{proof}

\begin{theorem}
Let \,$X$\, be a finite dimensional linear 2-normed space\,.\;Then every b-linear functional defined on \,$X \,\times\, \left <\,b\,\right >$\; is b-sequentially continuous\,.
\end{theorem}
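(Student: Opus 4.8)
The plan is to reduce the statement to Theorem~\ref{th2}. The obstruction to a naive application is that the codomain $\mathbb{K}$ of a b-linear functional has dimension $1$ and hence is not itself a linear 2-normed space, so the first move is to enlarge it. Fix $b\in X$, let $T:X\times\langle b\rangle\to\mathbb{K}$ be an arbitrary b-linear functional, and equip $\mathbb{K}^{2}$ with
\[
\bigl\|\,(a_{1},a_{2})\,,\,(c_{1},c_{2})\,\bigr\|_{\ast}\;=\;\bigl|\,a_{1}c_{2}\,-\,a_{2}c_{1}\,\bigr| .
\]
One checks (the four axioms are immediate from elementary properties of the $2\times2$ determinant and the triangle inequality for $|\cdot|$) that $\bigl(\mathbb{K}^{2},\,\|\cdot\,,\cdot\|_{\ast}\bigr)$ is a linear 2-normed space.

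Next I would introduce $S:X\to\mathbb{K}^{2}$ by $S(x)=\bigl(T(x,b),\,0\bigr)$. Conditions (1) and (2) in the definition of a b-linear functional say exactly that $T(\,\cdot\,,b)$ is linear in its first slot, so $S$ is a linear operator. Since $X$ is finite dimensional, Theorem~\ref{th2} applies (with $\mathbb{K}^{2}$ as codomain) and gives that $S$ is sequentially continuous on $X$. Then, for any $x\in X$ and any sequence $\{x_{n}\}$ with $x_{n}\to x$ in $X$, we have $S(x_{n})\to S(x)$ in $\mathbb{K}^{2}$, i.e.\ $\|S(x_{n})-S(x)\,,\,w\|_{\ast}\to 0$ for every $w\in\mathbb{K}^{2}$. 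Taking the test vector $w=(0,1)$,
\[
\bigl\|\,S(x_{n})-S(x)\,,\,(0,1)\,\bigr\|_{\ast}
=\bigl\|\,\bigl(T(x_{n},b)-T(x,b),\,0\bigr)\,,\,(0,1)\,\bigr\|_{\ast}
=\bigl|\,T(x_{n},b)-T(x,b)\,\bigr| ,
\]
so $T(x_{n},b)\to T(x,b)$ in $\mathbb{K}$. As $x$ and $\{x_{n}\}$ were arbitrary, $T$ is b-sequentially continuous.

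I expect the only genuine point to get right is the legitimacy of this reduction: one must produce an auxiliary linear 2-normed space together with a ``test vector'' that converts $2$-norm convergence of $S(x_{n})$ back into ordinary convergence of the scalars $T(x_{n},b)$, and the determinant $2$-norm on $\mathbb{K}^{2}$ with $w=(0,1)$ does precisely this. Everything else is bookkeeping (checking $\|\cdot\,,\cdot\|_{\ast}$ is a $2$-norm, and linearity of $S$).

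As an alternative, one could argue directly and avoid Theorem~\ref{th2}: choose a basis $\{b=b_{1},b_{2},\dots,b_{n}\}$ of $X$, write $x_{n}-x=\sum_{i}\alpha_{i}^{(n)}b_{i}$, and note that for each fixed $j$ the map $v\mapsto\|v\,,\,b_{j}\|$ is a bona fide norm on the subspace spanned by $\{b_{i}:i\neq j\}$ (nondegeneracy follows from (N1) together with linear independence of the basis). Since all norms on a finite dimensional space are equivalent and, by (N3)--(N4), $\|x_{n}-x\,,\,b_{j}\|=\bigl\|\sum_{i\neq j}\alpha_{i}^{(n)}b_{i}\,,\,b_{j}\bigr\|\to 0$, we get $\alpha_{i}^{(n)}\to 0$ for all $i\neq j$; letting $j$ range over the (at least two) indices forces $\alpha_{i}^{(n)}\to 0$ for every $i$, whence $T(x_{n},b)-T(x,b)=\sum_{i}\alpha_{i}^{(n)}T(b_{i},b)\to 0$. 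This route, however, essentially re-proves a special case of Theorem~\ref{th2}, so I would present the first one.
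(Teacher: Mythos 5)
Your main argument is correct, and it takes a genuinely different route from the paper. The paper argues coordinate-wise: it writes $x_{\,n} \,=\, \sum_j a_{\,n,\,j}\,e_{\,j}$ and $x \,=\, \sum_j a_{\,j}\,e_{\,j}$ in a basis, invokes the fact (extracted from the \emph{proof} of Theorem~\ref{th2}, not its statement) that $a_{\,n,\,j} \,\to\, a_{\,j}$, and then passes to the limit using linearity of $T$ in the first slot --- i.e.\ essentially the ``alternative'' you sketch and decline to present. Your primary proof instead uses Theorem~\ref{th2} as a black box, and the price of that is exactly the obstruction you identify: the codomain $\mathbb{K}$ is one-dimensional, so you manufacture the auxiliary 2-normed space $\left(\,\mathbb{K}^{\,2}\,,\,\|\cdot\,,\cdot\|_{\ast}\,\right)$ with the determinant 2-norm, push $T(\cdot\,,b)$ into it via $S(x) \,=\, (\,T(x,b)\,,\,0\,)$, and recover scalar convergence with the test vector $(0,1)$; all of these steps check out ($\|\cdot\,,\cdot\|_{\ast}$ satisfies (N1)--(N4), $S$ is linear, and $\|\,(a,0)\,,\,(0,1)\,\|_{\ast} \,=\, |a|$). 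What your version buys is that it needs only the statement of Theorem~\ref{th2}, whereas the paper's proof quietly depends on an intermediate fact from that theorem's proof; what the paper's version buys is that it avoids constructing an auxiliary space. One small caveat applies equally to both arguments: Theorem~\ref{th2} is stated only for spaces over $\mathbb{R}$, so strictly speaking your reduction (like the paper's citation of the coordinate convergence) is justified only in the real case, and for $\mathbb{K} \,=\, \mathbb{C}$ one would either restrict scalars or note that the same determinant construction works for $\mathbb{C}^{\,2}$ as a complex 2-normed space together with a complex version of Theorem~\ref{th2}.
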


\begin{proof}
Let \,$X$\, be a finite dimensional linear 2-normed space and \,$T \,:\, X \,\times\, \left <\,b\,\right > \,\to\, \mathbb{K}$\; be a b-linear functional\,.\;If \,$ X \,=\, \{\,0\,\}$\; then the proof is obvious\,.\;Suppose that \,$ X \,\neq\, \{\,0\,\}$\; and let \,$\left\{\; e_{\,1} \;,\; e_{\,2} \;,\; \;\cdots\; \; e_{\,m}\,\right\}$\; be a basis for \,$X$\,.\;We now show that \,$T$\; is b-sequentially continuous\,.\;Let \,$\{\,x_{\,n}\,\}$\; be a sequence in \,$X$\, with \,$ x_{\,n} \,\to\, x $\; as \,$n \,\to\, \infty$\,.\;Now, we can write\,, 
\[ x_{\,n} \;=\; a_{\,n \,,\, 1}\,e_{\,1} \;+\; a_{\,n \,,\, 2}\,e_{\,2} \;+\; \;\cdots \;+\; a_{\,n \,,\, m}\,e_{\,m}\]and 
\[x \;=\; a_{\,1}\,e_{\,1} \;+\; a_{\,2}\,e_{\,2} \;+\; \;\cdots \;+\; a_{\,m}\,e_{\,m}\hspace{.8cm}\]  
where \,$ a_{\,n \,,\, j} \;,\; a_{\,1} \;,\; a_{\,2} \;\cdots \;a_{\,m} \,\in\, \mathbb{R}$\,.\;In Theorem (\ref{th2}), it has been shown that \,$ a_{\,n \,,\, j} \,\to\, a_{\,j}$\; as \,$n \,\to\, \infty$\; for all \,$\,j$\,.\;Then
\[ T\,(\, x_{\,n} \,,\, b \,) \,=\, T\,\left(\, a_{\,n \,,\, 1}\,e_{\,1} \,+\, a_{\,n \,,\, 2}\,e_{\,2} \,+\, \cdots \,+\, a_{\,n \,,\, m}\,e_{\,m} \;,\; b \,\right) \hspace{2cm}\]
\[ {\hspace{2cm}} \,=\, a_{\,n\,,\,1}\,T\,(\,e_{\,1} \,,\, b \,) \,+\, a_{\,n\,,\,2}\,T\,(\, e_{\,2} \,,\, b \,) \,+\, \cdots \,+\, a_{\,n\,,\,m}\,T\,(\, e_{\,m} \,,\, b \,) \]
\[ {\hspace{2.7cm}} \,\to\, \,a_{\,1}\,T\,(\, e_{\,1} \,,\, b \,) \,+\, a_{\,2}\,T\,(\, e_{\,2} \,,\, b \,) \,+\,  \cdots \,+\, a_{\,m}\,T\,(\, e_{\,m} \,,\, b \,) \,,\, \text{as}\; n \,\to\, \infty \]
\[ {\hspace{.7cm}} \,=\, T\,\left(\, a_{\,1}\,e_{\,1} \,+\, a_{\,2}\,e_{\,2} \,+\, \cdots \,+\, a_{\,m}\,e_{\,m} \,,\, b \,\right) \,=\, T\,(\, x \,,\, b \,)\;.\]
Thus, we have shown that if \,$ x_{\,n} \,\to\, x \,\Rightarrow\, T\,(\,x_{\,n} \,,\, b \,) \,\to\, T\,(\, x \,,\, b \,)$\,.\;Therefore, \,$T$\, is b-sequentially continuous\,.
\end{proof}

%=====================================
\section{Results in 2-Banach space analogous to Uniform Boundedness Principle and Hahn-Banach Theorem}
%=====================================

In this section we give the notion of Pointwise boundedness and Uniformly boundedness of a bounded b-linear functional in linear 2-normed space\,.\;We derive an analogue of Uniform Boundedness Principle and Hahn-Banach Extension Theorem for bounded b-linear functional on 2-normed space\,.\;We define weak\,*\,convergence of sequence of bounded b-linear functionals in linear 2-normed spaces\,.

\begin{definition}
A subset \,$A$\, of a linear 2-normed space \,$X$\, is said to be nowhere dense if its closure has empty interior\,.\;Thus, \,$A$\, is nowhere dense in \,$X$\, if corresponding to any open ball \,$B_{\,e}\,(\, a \,,\, \delta \,)$\; in \,$X$\, with \,$a \,\in\, A $, there exists another open ball \,$B_{\,e}\,(\, a^{\,'} \,,\, \delta^{\,'} \,)$\; such that \,$B_{\,e}\,(\, a^{\,'} \,,\, \delta^{\,'} \,) \,\subset\, B_{\,e}\,(\, a \,,\, \delta \,)$\; with \,$ A \,\cap\, B_{\,e}\,(\, a^{\,'} \,,\, \delta^{\,'} \,) \,=\, \phi$\,.
\end{definition}

\begin{definition}
Let \,$X$\; be a linear 2-normed space\,.\;A set \,$\mathcal{A}$\; of bounded b-linear functionals defined on \,$X \,\times\, \left <\,b\,\right >$\; is said to be:

\begin{itemize}
\item[(a)]\;\; Pointwise bounded if for each \,$x \,\in\, X $, the set \,$\left\{\,T\,(\, x \,,\, b \,) \,:\, T \,\in\, \;\mathcal{A} \,\right\}$\; is a bounded set in \;$\mathbb{K}$\,.\;That is 
\[\left|\, T\,(\, x \,,\, b \,)\,\right| \;\leq\; K \,\left\|\, x \,,\, b \,\right\|\;\; \;\forall\; x \;\in\; X \;\;\&\;\; \forall\; T \;\in\; \mathcal{A}\;.\]
\item[(b)]\;\; Uniformly bounded if there is a constant \,$K \,>\, 0$ such that \,$\|\,T\,\| \,\leq\, K\;\; \;\forall\; T \,\in\, \mathcal{A}$\,.
\end{itemize}

\end{definition}

\begin{theorem}
If a set \,$\mathcal{A}$\; of bounded b-linear functionals on \,$X \,\times\, \left <\,b\,\right >$\; is uniformly bounded then it is pointwise bounded set\,.
\end{theorem}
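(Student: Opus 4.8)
The statement is the easy implication between the two boundedness notions just defined, so the plan is a direct unpacking of the definitions. Suppose $\mathcal{A}$ is uniformly bounded; by definition there is a constant $K > 0$ such that $\|\,T\,\| \,\leq\, K$ for every $T \,\in\, \mathcal{A}$. The goal is to produce, for each fixed $x \,\in\, X$, a bound on $\left|\,T\,(\,x \,,\, b\,)\,\right|$ of the form $K\,\|\,x \,,\, b\,\|$ valid simultaneously for all $T \,\in\, \mathcal{A}$, which is precisely the pointwise-bounded condition in part (a) of the definition.

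First I would invoke the last inequality in the definition of the norm of a b-linear functional, namely $\left|\,T\,(\,x \,,\, b\,)\,\right| \,\leq\, \|\,T\,\|\,\|\,x \,,\, b\,\|$ for all $x \,\in\, X$. Combining this with $\|\,T\,\| \,\leq\, K$ gives
\[
\left|\,T\,(\,x \,,\, b\,)\,\right| \;\leq\; \|\,T\,\|\,\|\,x \,,\, b\,\| \;\leq\; K\,\|\,x \,,\, b\,\|
\]
for every $x \,\in\, X$ and every $T \,\in\, \mathcal{A}$. Since $x$ was arbitrary, this shows that for each fixed $x$ the set $\left\{\,T\,(\,x \,,\, b\,) \,:\, T \,\in\, \mathcal{A}\,\right\}$ is bounded in $\mathbb{K}$ by $K\,\|\,x \,,\, b\,\|$, so $\mathcal{A}$ is pointwise bounded.

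There is no real obstacle here: the only thing to be careful about is to quote the already-established inequality $\left|\,T\,(\,x \,,\, b\,)\,\right| \,\leq\, \|\,T\,\|\,\|\,x \,,\, b\,\|$ rather than re-deriving it, and to note that the same constant $K$ works uniformly over $\mathcal{A}$, which is what distinguishes this from the tautological observation that each individual $T$ is bounded. The converse direction (pointwise bounded $\Rightarrow$ uniformly bounded) is the substantive Uniform Boundedness Principle and would require Baire's theorem for 2-Banach spaces (Theorem \ref{th1}); that is presumably the content of the next result and is not needed here.
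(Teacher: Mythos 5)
Your proposal is correct and follows exactly the paper's own argument: fix $x$, apply the standard inequality $\left|\,T\,(\,x \,,\, b\,)\,\right| \,\leq\, \|\,T\,\|\,\|\,x \,,\, b\,\|$, and combine it with the uniform bound $\|\,T\,\| \,\leq\, K$ to obtain $\left|\,T\,(\,x \,,\, b\,)\,\right| \,\leq\, K\,\|\,x \,,\, b\,\|$ for all $T \,\in\, \mathcal{A}$. No differences worth noting.
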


\begin{proof}
Suppose \,$\mathcal{A}$\; uniformly bounded\,.\;Then there is a constant \,$K \,>\, 0$\, such that \,$\|\,T\,\| \;\leq\; K \;\; \;\forall\; T \;\in\; \mathcal{A}$\,.\;Let \,$x \;\in\; X$\, be given\,. Then, 
\[\left|\,T\,(\, x \,,\, b \,)\,\right| \,\leq\, \|\,T \,\| \,\left\| \,x \,,\, b \,\right\| \,\leq\, K\, \,\left\|\, x \,,\, b \,\right\| \; \;\forall\; T \;\in\; \mathcal{A}\]
and hence \,$\mathcal{A}$\, is poinwise bounded set in \,$\mathbb{K}$\,.
\end{proof}

\begin{theorem}\label{th6}
Let \,$X$\; be 2-Banach space over the field \,$\mathbb{K}$\; and \,$ b \,\in\, X $\; be fixed\,.\;If a set \,$\mathcal{A}$\; of bounded b-linear functionals on \,$ X \,\times\, \left <\,b\,\right > $\; is pointwise bounded, then it is uniformly bounded\,.
\end{theorem}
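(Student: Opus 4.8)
The plan is to imitate the classical Baire-category proof of the Banach--Steinhaus theorem, carried out directly inside the $2$-Banach space $X$, with Theorem \ref{th1} playing the role of the Baire category theorem and Theorem \ref{th4} supplying the estimate that makes the relevant sets closed. Concretely, for each $n \in \mathbb{N}$ I would introduce
\[
F_{\,n} \;=\; \bigl\{\, x \in X \;:\; |\,T(\,x \,,\, b\,)\,| \;\le\; n \ \text{ for every } T \in \mathcal{A} \,\bigr\}\,.
\]
Since $\mathcal{A}$ is pointwise bounded, $\sup_{T \in \mathcal{A}} |\,T(\,x \,,\, b\,)\,|$ is finite for each $x \in X$, so $x$ belongs to $F_{\,n}$ as soon as $n$ exceeds this supremum; hence $X = \bigcup_{n=1}^{\infty} F_{\,n}$.

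Next I would check that each $F_{\,n}$ is closed in the sense of the paper's notion of closed set. If $\{x_{\,k}\} \subseteq F_{\,n}$ and $x_{\,k} \to x$ in $X$, then in particular $\|\,x_{\,k} - x \,,\, b\,\| \to 0$, so for each fixed $T \in \mathcal{A}$ Theorem \ref{th4} gives $|\,T(\,x_{\,k} \,,\, b\,) - T(\,x \,,\, b\,)\,| \le \|\,T\,\|\,\|\,x_{\,k} - x \,,\, b\,\| \to 0$; thus $T(\,x \,,\, b\,) = \lim_{k} T(\,x_{\,k} \,,\, b\,)$, and from $|\,T(\,x_{\,k} \,,\, b\,)\,| \le n$ we get $|\,T(\,x \,,\, b\,)\,| \le n$, i.e. $x \in F_{\,n}$. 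Now I would apply Baire's theorem: if every $F_{\,n}$ were nowhere dense then each $X \setminus F_{\,n}$ would be open and dense, so by Theorem \ref{th1} the set $\bigcap_{n}(X \setminus F_{\,n}) = X \setminus \bigcup_{n} F_{\,n}$ would be dense, hence nonempty, contradicting $X = \bigcup_{n} F_{\,n}$. Therefore some $F_{\,N}$ is not nowhere dense, and being closed it has nonempty interior, so it contains an open ball $B_{\,e}(\,x_{\,0} \,,\, r\,)$ for suitable $e \in X$, $x_{\,0} \in X$, $r > 0$.

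From this ball I would extract the bound. For every $z \in X$ with $\|\,z \,,\, e\,\| < r$ we have $x_{\,0} + z \in B_{\,e}(\,x_{\,0} \,,\, r\,) \subseteq F_{\,N}$ and also $x_{\,0} \in F_{\,N}$, so by linearity of $T$,
\[
|\,T(\,z \,,\, b\,)\,| \;=\; |\,T(\,x_{\,0} + z \,,\, b\,) - T(\,x_{\,0} \,,\, b\,)\,| \;\le\; 2N \qquad \text{for all } T \in \mathcal{A}\,.
\]
Then, given any $x \in X$, an appropriate positive multiple of $x$ has $\|\,\cdot \,,\, e\,\|$-size below $r$, and homogeneity of $T$ and of the $2$-norm should turn the displayed inequality into an estimate of the form $|\,T(\,x \,,\, b\,)\,| \le (2N/r)\,\|\,x \,,\, b\,\|$ valid simultaneously for all $T \in \mathcal{A}$, which yields $\|\,T\,\| \le 2N/r$ for every $T \in \mathcal{A}$ and hence uniform boundedness.

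The step I expect to be delicate is this last one: the ball produced by Baire's theorem is measured by the $2$-seminorm $\|\,\cdot \,,\, e\,\|$ for an $e$ that need not be proportional to $b$, whereas $\|\,T\,\|$ is computed through $\|\,\cdot \,,\, b\,\|$. To bridge the gap one must use that $T(\,\cdot \,,\, b\,)$ vanishes on $\langle b\rangle$ and so is unchanged when $x$ is adjusted by a multiple of $b$, and then compare $\|\,\cdot \,,\, e\,\|$ with $\|\,\cdot \,,\, b\,\|$ on the relevant coset of $\langle b\rangle$; an alternative is to pass to the quotient $X/\langle b\rangle$, on which $\|\,\cdot \,,\, b\,\|$ becomes a genuine norm and each $T$ induces a bounded linear functional of the same norm, and invoke the classical uniform boundedness principle there. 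The other place where the $2$-normed setting (rather than the normed one) must be handled carefully is the closedness of the $F_{\,n}$, which is precisely where Theorem \ref{th4} does the work.
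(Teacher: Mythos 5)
Your proposal follows the same route as the paper in every essential detail: the same sets $F_{\,n}$, closedness of $F_{\,n}$ via the Lipschitz estimate of Theorem \ref{th4} (the paper cites Theorem \ref{th5}, which is itself proved from Theorem \ref{th4}, so this is the identical argument), the decomposition $X = \bigcup_{n} F_{\,n}$ from pointwise boundedness, Baire's theorem (Theorem \ref{th1}) to produce a ball $B_{\,e}\,(\,x_{\,0}\,,\,\delta\,) \subseteq F_{\,n_{\,0}}$, and the translation-plus-homogeneity step giving a uniform bound on $T$ over that ball.

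The step you flag as delicate is a genuine gap, and you should know the paper does not resolve it either: the paper simply writes $\|\,T\,\| = \sup\{\,|\,T\,(\,x\,,\,b\,)\,| : x \in B_{\,e}\,(\,0\,,\,1\,)\,\}$ and concludes $\|\,T\,\| \le 2\,n_{\,0}/\delta$. That identity is the definition of $\|\,T\,\|$ only when the ball is taken with respect to $\|\,\cdot\,,\,b\,\|$, i.e. only if $e$ is a scalar multiple of $b$ --- and Baire's theorem gives no control over which $e$ appears. Your two suggested repairs do not obviously close the gap: scaling only yields $|\,T\,(\,x\,,\,b\,)\,| \le (2\,n_{\,0}/\delta)\,\|\,x\,,\,e\,\|$, and there is no general comparison between $\|\,x\,,\,e\,\|$ (or $\inf_{s}\|\,x+s\,b\,,\,e\,\|$) and $\|\,x\,,\,b\,\|$; likewise the quotient $X/\left<\,b\,\right>$ carries $\|\,\cdot\,,\,b\,\|$ as a genuine norm but need not be complete under it, since 2-Banach completeness is a simultaneous condition over all second arguments, so the classical uniform boundedness principle cannot simply be invoked there. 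In short, your write-up is an accurate reconstruction of the paper's argument together with an honest identification of the one step the paper passes over without justification; as it stands, neither your sketch nor the paper's proof actually carries that step out.
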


\begin{proof}
For each positive integer \,$n$, we consider the set 
\[ F_{n} \,=\, \left \{\, x \,\in\, X \,:\, \left|\,T\,(\, x \,,\, b \,) \,\right| \,\leq\, n \; \;\forall\; T \;\in\; \mathcal{A} \;\right \} \;.\]
We now show that \,$F_{n}$\; is a closed subset of \,$X$\,.\;Let \,$x \,\in\, \overline{F_{n}}$\; and \,$\{\, x_{\,k} \,\}$\; be a sequence in \,$F_{n}$\, such that \, $ x_{\,k} \,\to\, x $\; as \;$ k \,\to\, \infty $\,.\;Then 
\[\left|\,T\,(\, x_{\,k} \,,\, b\,)\,\right| \;\leq\; n \; \;\forall\; T \;\in\; \mathcal{A}\;.\]
Now, by the Theorem (\ref{th5}), \,$T$\, is b-sequentially continuous\,.\;So 
\[ \lim\limits_{k \;\to\; \infty} \left|\, T\,(\,x_{\,k} \,,\, b\,) \,\right| \,=\, T\,(\,x \,,\, b\,)\;.\]
This shows that \,$|\,T\,(\, x \,,\, b\,)\,| \,\leq\, n \; \;\;\forall\; T \,\in\, \mathcal{A} \,\Rightarrow\, x \,\in\, F_{\,n}$\; and hence \,$F_{\,n}$\; becomes a closed subset of \,$X$\, for every \,$n \,\in\, \mathbb{N}$\,.\;Since \,$\mathcal{A}$\; is pointwise bounded, then the set \,$\left\{\,T\,(\,x \,,\, b\,) \,:\, T \,\in\, \mathcal{A}\,\right\}$\; is a bounded for each \,$ x \,\in\, X $\,.\;Thus, we see that for each \,$x \,\in\, X$\; is in some \;$F_{n}$\; and therefore
\[ X \;=\; \;\bigcup\limits_{n \,=\, 1}^{\infty}\; F_{n}\;.\]
Since \,$X$\, is 2-Banach space, by Baire's Category theorem for 2-Banach space, \,$\exists\; \,n_{\,0} \,\in\, \mathbb{N}$\; such that \,$F_{n_{\,0}}$\; is not nowhere dense in \,$X$\; i\,.\,e \,$F_{\,n_{\,0}}$\; has nonempty interior\,. Consequently, \,$\exists$\; a non-empty open ball \,$B_{\,e}\,(\,x_{\,0} \,,\, \delta\,)$\; such that \,$B_{\,e}\,(\,x_{\,0} \,,\, \delta\,) \,\subset\, F_{n_{\,0}}$\,. i\,.\,e\,, 
\[|\,T\,(\,x \,,\, b \,)\,| \;\leq\; n_{\,0}\;\;\;\forall\; x \;\in\; B_{\,e}\,(\,x_{\,0} \,,\, \delta\,) \; \;\text{and}\; \; \forall\; T \;\in\; \mathcal{A}\;.\]
The above expression can be written in the form 
\[\left|\,T\,\left(\,B_{\,e}\,(\,x_{\,0} \,,\, \delta\,) \,,\, b\,\right)\,\right| \,\leq\, n_{\,0} \; \;\forall\; T \,\in\, \mathcal{A}\;.\]Note that 
\[x_{\,0} \,+\, \delta\, B_{\,e}\,(\,0 \,,\, 1\,) \,=\, \left\{\,x \,\in\, X \,:\, x \,=\, x_{\,0} \,+\, \delta\, a \;\;,\;\; a \,\in\, B_{\,e}\,(\,0 \,,\, 1\,)\,\right\}\]
\[{\hspace{2.5cm}} \;=\; \left\{\,x \,\in\, X \,:\, x \,=\, x_{\,0} \,+\, \delta\, a \;\;,\; \left\|\,a \,,\, e\,\right\| \,<\, 1 \,\right\}\]
\[\hspace{1cm}\;=\, \left\{\,x \,\in\, X \,:\, \left\|\,\dfrac{x \,-\, x_{\,0}}{\delta} \,,\, e \,\right\| \,<\, 1 \,\right\}\]
\[\hspace{3.4cm} \,=\, \left\{\, x \,\in\, X \,:\, \left\|\,x \,-\, x_{\,0} \,,\, e\,\right\| \,<\, \delta\,\right\} \,=\, B_{\,e}\,(\,x_{\,0} \,,\, \delta\,)\]
\[\Rightarrow\, B_{\,e}\,(\,0 \,,\, 1\,) \,=\, \dfrac{B_{\,e}\,(\,x_{\,0} \,,\,\delta \,) \,-\, x_{\,0}}{\delta}\;.\hspace{3.5cm}\]
Clearly, 
\[ |\,T\,(\,x_{\,0} \,,\, b \,)\,| \,\leq\, n_{\,0} \; \;\;\forall\; T \,\in\, \mathcal{A}\; \;[\;\because\; x_{\,0} \,\in\, B_{\,e}\,(\,x_{\,0} \,,\, \delta\,)\;]\;.\]Therefore,
\[\left|\,T\,(\,B_{\,e}\,(\,0 \,,\, 1\,) \,,\, b\,)\,\right| \,=\, \left|\,T\,\left(\,\dfrac{B_{\,e}\,(\,x_{\,0} \,,\, \delta\,) \,-\, x_{\,0}}{\delta} \,,\, b\,\right)\,\right|\hspace{1cm}\]
\[{\hspace{2.5cm}} \;=\; \left|\,\dfrac{1}{\delta}\,T\, \left(\,B_{\,e}\,(\,x_{\,0} \,,\, \delta\,) \,-\, x_{\,0} \,,\, b\,\right)\,\right|\]
\[ \hspace{4.8cm} \,\leq\, \dfrac{1}{\delta}\, \left\{\,\left|\,T\,(\, B_{\,e}\,(\,x_{\,0} \,,\, \delta\,) \,,\, b\,)\,\right| \,+\, |\,T\,(\, x_{\,0} \,,\, b\,)\,|\,\right\}\] 
\[ \hspace{.7cm} \;\leq\; \dfrac{2 \,n_{\,0}}{\delta}\; \;\;\forall\; T \;\in\; \mathcal{A}\] 
\[\Rightarrow\, |\,T\,(\,x \,,\, b \,) \;| \;\leq\; \dfrac{2\,n_{\,0}}{\delta} \;\; \;\forall\; x \;\in\; B_{\,e}\,(\, 0 \,,\, 1\,) \; \;\text{and}\; \;\forall\; T \;\in\; \mathcal{A}\;.\]Thus,
\[\|\,T\,\| \,=\, \sup\left\{\,\left|\,T\,(\,x \,,\, b\,)\,\right| \,:\, x \,\in\, B_{\,e}\,(\, 0 \,,\, 1\,)\;\; \;\;\forall\; T \,\in\, \mathcal{A} \,\right\} \,\leq\, \dfrac{2\,n_{\,0}}{\delta}\; \;\forall\; T \,\in\, \mathcal{A}\;.\]
This proves that \,$\mathcal{A}$\; is uniformly bounded\,.
\end{proof}

\begin{theorem}
Let \,$X$\; be a 2-Banach space and \,$X^{\,\ast}_{\,b}$\; be the Banach space of all bounded b-linear functionals defined on \,$X \,\times\, \left <\,b\,\right >$\,.\;If \,$\left\{\,T_{\,n}\,\right\} \,\subseteq\, X^{\,\ast}_{\,b}$\; be a sequence such that
\begin{equation}\label{eqn3} 
\lim\limits_{n \;\to\; \infty}\,T_{\,n}\,(\,x \,,\, b\,) \,=\, T\,(\,x \,,\, b\,)\;  \;\;\forall\; x \;\in\; X
\end{equation}
exists, then \;$T \;\in\; X^{\,\ast}_{\,b}$\,.
\end{theorem}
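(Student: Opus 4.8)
The plan is to show first that $T$ is a b-linear functional and then that it is bounded, the latter being a direct consequence of the Uniform Boundedness Principle established in Theorem \ref{th6}. The b-linearity is the easy part: for $x,y \in X$ and $k \in \mathbb{K}$, using the limit in (\ref{eqn3}) together with the b-linearity of each $T_{\,n}$,
\[ T\,(\,x \,+\, y \,,\, b\,) \,=\, \lim_{n \to \infty} T_{\,n}\,(\,x \,+\, y \,,\, b\,) \,=\, \lim_{n \to \infty} \left(\,T_{\,n}\,(\,x \,,\, b\,) \,+\, T_{\,n}\,(\,y \,,\, b\,)\,\right) \,=\, T\,(\,x \,,\, b\,) \,+\, T\,(\,y \,,\, b\,)\;, \]
and similarly $T\,(\,k\,x \,,\, b\,) \,=\, \lim_{n} k\,T_{\,n}\,(\,x \,,\, b\,) \,=\, k\,T\,(\,x \,,\, b\,)$, since scalar limits respect sums and scalar multiples. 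Thus $T$ is a b-linear functional on $X \,\times\, \left<\,b\,\right>$.

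For boundedness, I would first observe that, for each fixed $x \,\in\, X$, the scalar sequence $\left\{\,T_{\,n}\,(\,x \,,\, b\,)\,\right\}$ is convergent by hypothesis, hence bounded in $\mathbb{K}$; that is, $\sup_{n} \left|\,T_{\,n}\,(\,x \,,\, b\,)\,\right| \,<\, \infty$ for every $x$. Therefore the family $\mathcal{A} \,=\, \left\{\,T_{\,n} \,:\, n \,\in\, \mathbb{N}\,\right\}$ of bounded b-linear functionals on $X \,\times\, \left<\,b\,\right>$ is pointwise bounded in the sense of the definition preceding Theorem \ref{th6}. Since $X$ is a 2-Banach space, Theorem \ref{th6} applies and yields a constant $K \,>\, 0$ with $\|\,T_{\,n}\,\| \,\leq\, K$ for all $n$.

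Finally, passing to the limit in the pointwise estimate, for every $x \,\in\, X$ we get
\[ \left|\,T\,(\,x \,,\, b\,)\,\right| \,=\, \lim_{n \to \infty} \left|\,T_{\,n}\,(\,x \,,\, b\,)\,\right| \,\leq\, \limsup_{n \to \infty} \|\,T_{\,n}\,\|\,\|\,x \,,\, b\,\| \,\leq\, K\,\|\,x \,,\, b\,\|\;, \]
so $T$ is a bounded b-linear functional with $\|\,T\,\| \,\leq\, K$, i.e.\ $T \,\in\, X^{\,\ast}_{\,b}$. I do not expect any genuine obstacle here: once b-linearity is checked, the whole argument is a routine application of the already-proved Uniform Boundedness Principle, the only point requiring a word of care being the reduction "pointwise convergence $\Rightarrow$ pointwise boundedness," which is immediate for convergent sequences of scalars.
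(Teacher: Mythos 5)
Your argument is correct and is essentially identical to the paper's own proof: both deduce pointwise boundedness from pointwise convergence, invoke the Uniform Boundedness Principle (Theorem \ref{th6}) to get a uniform bound on $\|\,T_{\,n}\,\|$, and pass to the limit in $|\,T_{\,n}\,(\,x \,,\, b\,)\,| \,\leq\, M\,\|\,x \,,\, b\,\|$. The only difference is that you spell out the b-linearity of $T$, which the paper dismisses as clear.
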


\begin{proof}
Note that (\ref{eqn3}) defines a mapping \,$T \,:\, X \,\times\, \left<\,b\,\right> \,\to\, \mathbb{K}$\; which is, clearly, a b-linear functional\,.\;We need only to show that \,$T$\; is bounded\,.\\Since, for every \,$x \,\in\, X$, \,$\left\{\,T_{\,n}\,(\,x \,,\, b\,)\,\right\}$\; convergent sequence in \,$\mathbb{K}$, then it is bounded in \,$\mathbb{K}$\,.\;By the Theorem (\ref{th6}), the set \,$\left\{\,\left\|\,T_{\,n}\,\right\|\,\right\}$\; is bounded\,.\;Then there exists some constant \,$M \,>\, 0$\; such that \,$\left\|\,T_{\,n}\,\right\| \;\leq\; M \;\; \;\forall\; n \;\in\; \mathbb{N}$\,.\;Therefore, 
\[\left|\,T_{\,n}\,(\,x \,,\, b\,)\,\right| \,\leq\, \,\left\|\,T_{\,n}\,\right\|\,  \left\|\,x \,,\, b\,\right\| \,\leq\, M\, \left\|\,x \,,\, b\,\right\|\;\; \;\forall\; x \,\in\, X \;\;\&\;\; \;\forall\; n \,\in\, \mathbb{N}\]
\[ \Rightarrow\; \lim\limits_{n \;\to\; \infty}\, \left|\,T_{\,n}\,(\,x \,,\, b\,)\,\right| \;\leq\;  M\, \left\|\,x \,,\, b\,\right\| \;\; \;\forall\; x \;\in\; X \]
\[ \Rightarrow\; |\,T\,(\, x \,,\, b\,)\,| \;\leq\;  M\, \left\|\,x \,,\, b\,\right\|\;\; \;\forall\; x \,\in\, X\;. \hspace{1.1cm}\]
This shows that \,$T$\; is bounded b-linear functional defined on \,$X \,\times\, \left <\,b\,\right >$\; and hence \,$T \,\in\, X^{\,\ast}_{\,b}$\,. 
\end{proof}

\begin{definition}
A sequence \,$\left\{\, T_{\,n} \,\right\}$\; in \,$X^{\,\ast}_{\,b}$\,, where \,$X^{\,\ast}_{\,b}$\;is the Banach space of all bounded b-linear functionals defined on \,$X \,\times\, \left <\,b\,\right >$\; is said to be b-weak\,*\,convergent if there exists an \,$T \,\in\, X^{\,\ast}_{\,b}$\; such that 
\[\lim\limits_{n \;\to\; \infty}\,T_{\,n}\,(\, x \,,\, b \,) \,=\, T\,(\,x \,,\, b \,) \;\;\forall\; x \;\in\; X\;.\] 
The limits \,$T$\, is called the b-weak\,*\,limit of the sequence \,$\left\{\, T_{\,n}\,\right\}$\,. 
\end{definition}

\begin{definition}
A subset \,$M$\, of a linear 2-normed space \,$X$\, is said to be a fundamental or total set if the set \,$\textit{Span}\,M$\; is dense in \,$X$, that is \,$\overline{\textit{Span}\,M} \,=\, X$\,.
\end{definition}

\begin{theorem}
Let \,$X$\, be a 2-Banach space and \,$\left\{\,T_{\,n}\,\right\} \,\subseteq\, X^{\,\ast}_{\,b}$\; be a sequence\,.\;Then \,$\left\{\,T_{\,n}\,\right\}$\; is b-weak\,*\,Convergent if and only if the following two conditions hold:
\begin{itemize}
\item[(a)]\;\; The sequence \,$\left\{\,\|\,T_{\,n}\,\|\,\right\}$\; is bounded and
\item[(b)]\;\; The sequence \,$\left\{\,T_{\,n}\,(\,x \,,\, b\,)\,\right\}$\; is Cauchy sequence for each \,$ x \,\in\, W$, where \,$W$\, is fundamental or total subset of \,$X\,$\,.
\end{itemize}
\end{theorem}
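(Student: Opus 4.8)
The plan is to prove the two implications separately: the necessity of (a) and (b) will follow at once from the Uniform Boundedness Principle (Theorem \ref{th6}), while the sufficiency will be a three-$\varepsilon$ density argument in which the uniform bound (a) is the decisive ingredient.

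\emph{Necessity.} Suppose $\{T_n\}$ is b-weak\,*\,convergent, so that $\lim_{n\to\infty}T_n(x,b)$ exists for every $x\in X$. Then for each fixed $x$ the scalar sequence $\{T_n(x,b)\}$ converges in $\mathbb{K}$, hence is Cauchy; in particular this holds for every $x\in W$, which is (b). Moreover a convergent scalar sequence is bounded, so $\{T_n(x,b)\}$ is bounded for each $x\in X$; since each $T_n$ is a bounded b-linear functional, the set $\mathcal{A}=\{T_n:n\in\mathbb{N}\}$ is pointwise bounded on $X\times\langle b\rangle$, and Theorem \ref{th6} produces a constant $K>0$ with $\|T_n\|\le K$ for all $n$, which is (a).

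\emph{Sufficiency.} Conversely, assume (a) and (b) hold and fix $M>0$ with $\|T_n\|\le M$ for all $n$. First, by b-linearity of each $T_n$ together with (b), the sequence $\{T_n(x,b)\}$ is Cauchy for every $x\in\textit{Span}\,W$. Next, let $x\in X$ and $\varepsilon>0$ be arbitrary. Since $\overline{\textit{Span}\,W}=X$, there is a sequence in $\textit{Span}\,W$ converging to $x$, and taking $y=b$ in the definition of convergence we obtain an $x_0\in\textit{Span}\,W$ with $\|x-x_0,b\|<\varepsilon/(3M)$. Using $T_n(x,b)-T_n(x_0,b)=T_n(x-x_0,b)$, we estimate
\[
\left|T_n(x,b)-T_m(x,b)\right|\;\le\;\left|T_n(x-x_0,b)\right|+\left|T_n(x_0,b)-T_m(x_0,b)\right|+\left|T_m(x_0-x,b)\right|,
\]
and bound the two outer terms by $\|T_n\|\,\|x-x_0,b\|\le\varepsilon/3$ and $\|T_m\|\,\|x-x_0,b\|\le\varepsilon/3$ respectively, while the middle term is $<\varepsilon/3$ for $n,m$ sufficiently large because $x_0\in\textit{Span}\,W$. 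Hence $\{T_n(x,b)\}$ is Cauchy, and since $\mathbb{K}$ is complete it converges; define $T(x,b):=\lim_{n\to\infty}T_n(x,b)$.

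\emph{Finishing.} It remains to check $T\in X^{\,\ast}_{\,b}$. Additivity and homogeneity of $T$ pass to the limit from the corresponding properties of the $T_n$, so $T$ is a b-linear functional; and from $|T_n(x,b)|\le\|T_n\|\,\|x,b\|\le M\,\|x,b\|$ we get $|T(x,b)|\le M\,\|x,b\|$ for all $x\in X$, so $T$ is bounded with $\|T\|\le M$. Since $\lim_{n\to\infty}T_n(x,b)=T(x,b)$ for every $x\in X$, the sequence $\{T_n\}$ is b-weak\,*\,convergent with b-weak\,*\,limit $T$. The only delicate point is the passage from $\textit{Span}\,W$ to all of $X$ in the converse; it rests entirely on the uniform bound (a), which is precisely what makes the outer terms of the estimate small and without which neither the argument nor the statement holds.
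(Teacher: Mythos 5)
Your proposal is correct and follows essentially the same route as the paper: necessity via the Uniform Boundedness Principle (Theorem \ref{th6}) applied to the pointwise bounded family, and sufficiency via the standard three-$\varepsilon$ density estimate using $\overline{\textit{Span}\,W}=X$ and the uniform bound. Your closing verification that the limit $T$ is itself a bounded b-linear functional is a welcome extra detail (the paper delegates this to its preceding theorem), but it does not change the argument.
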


\begin{proof}
Let \,$\left\{\,T_{\,n}\,\right\}$\; be b-weak\,*\,Convergent in \,$X^{\,\ast}_{\,b}$\,.\;Then
\[\lim\limits_{n \;\to\; \infty}\,T_{\,n}\,(\, x \,,\, b\,) \,=\, T\,(\,x \,,\, b \,)\; \;\forall\; x \;\in\; X\;.\]
This implies that \,$\left\{\,T_{\,n}\,(\,x \,,\, b\,)\,\right\}$\; is bounded for each \,$ x \,\in\, X $\,.\;Since \,$X$\, is 2-Banach space then by Theorem (\ref{th6}), we get that \,$\left\{\,\|\, T_{\,n}\,\|\,\right\}$\; is bounded and therefore \,$(\,a\,)$\; hold\,.\;Now from the definition of b-weak\,*\,Convergence, \,$\left\{\, T_{\,n}\,(\,x \,,\, b \,)\,\right\}$\; is a convergent sequence of numbers for \,$x \,\in\, X$\,, in particular, for \,$ x \,\in\, W $\,.\;This proves \;$(\,b\,)$\,.\\

\textit{Conversely}, suppose that the given two conditions hold\,.\;Since the sequence \,$\left\{\,\|\,T_{\,n}\,\|\,\right\}$\; is bounded, \,$\exists$\; a constant \,$K \,>\, 0$\; such that \,$\|\, T_{\,n}\,\| \,\leq\, K \; \;\forall\; n \,\in\, \mathbb{N}$\,.\;Also, \,$\overline{\textit{\,Span\,W}} \,=\, X$, then for a given \,$\epsilon \,>\, 0$\; and for each \,$x \,\in\, X \;,\; \;\exists\; \;y \,\in\, \;\textit{Span\,W}$\; such that \;$\left\|\,x \,-\, y \,,\, b\,\right\| \;<\; \dfrac{\epsilon}{3\,K}$\,.\;Now from the condition \,$(\,b\,)$, it follows that \,$\left\{\,T_{\,n}\,(\,y \,,\, b\,)\,\right\}$\; is Cauchy sequence for \,$y \,\in\, \textit{Span\,W} $\; and hence there exists an integer \,$N \;>\; 0$\; such that
\[\left|\,T_{\,n}\,(\,y \,,\, b\,) \,-\, T_{\,m}\,(\,y \,,\, b\,)\right| \,<\, \dfrac{\epsilon}{3}\;\; \;\forall\; m \;,\; n \;\geq\; N\;.\]
Now, for an arbitrary element \,$x \,\in\, X$, we have 
\[\left|\,T_{\,n}\,(\,x \,,\, b\,) \,-\, T_{\,m}\,(\,x \,,\, b\,)\,\right|\]
\[ \,=\, \left|\,T_{\,n}\,(\,x \,,\, b\,) \,-\, T_{\,n}\,(\,y \,,\, b\,) \,+\, T_{\,n}\,(\,y \,,\, b\,) \,-\, T_{\,m}\,(\,y \,,\, b\,) \,+\, T_{\,m}\,(\,y \,,\, b\,) \,-\, T_{\,m}\,(\,x \,,\, b\,)\,\right| \] 
\[ \,\leq\, \left|\,T_{\,n}\,(\,x \,,\, b\,) \,-\, T_{\,n}\,(\,y \,,\, b\,)\,\right| \,+\, \left|\,T_{\,n}\,(\,y \,,\, b\,) \,-\, T_{\,m}\,(\,y \,,\, b\,)\,\right| \,+\, \left|\,T_{\,m}\,(\,y \,,\, b\,) \,-\, T_{\,m}\,(\,x \,,\, b\,)\,\right|\]
\[\,<\, \|\,T_{\,n}\,\|\, \left\|\,x \,-\, y \,,\, b\,\right\| \,+\, \left|\,T_{\,n}\,(\,y \,,\, b\,) \,-\, T_{\,m}\,(\,y \,,\, b\,)\,\right| \,+\, \|\,T_{\,m}\,\|\, \left\|\,x \,-\, y \,,\, b\,\right\| \hspace{2cm}\] 
\[\;<\; K\, \cdot\, \dfrac{\epsilon}{3\,K} \,+\, \dfrac{\epsilon}{3} \,+\, K\, \cdot\, \dfrac{\epsilon}{3\,K} \,=\, \epsilon \; \;\;\forall\; m \;,\; n \;\geq\; N \;.\hspace{5.3cm}\]
Therefore,
\[\left|\,T_{\,n}\,(\,x \,,\, b\,) \,-\, T_{\,m}\,(\,x \,,\, b\,)\,\right| \;<\; \epsilon \; \;\;\forall\; m \;,\; n \;\geq\; N\;. \]
This shows that \,$\left\{\,T_{\,n}\,(\,x \,,\, b\,)\,\right\}$\; is Cauchy sequence in \,$\mathbb{K}$\,.\;But \,$\mathbb{K}$\; is complete, So \,$\left\{\,T_{\,n}\,(\,x \,,\, b\,)\,\right\}$\; is converges to \,$T\,(\,x \,,\, b\,)$\; in \,$\mathbb{K}$\,.\;Since \,$x$\, is an arbitrary element of \,$X$, therefore it follows that  
\[\lim\limits_{n \to \infty}\,T_{\,n}\,(\,x \,,\,b\,) \,=\, T\,(\,x \,,\, b\,) \; \;\forall\; x \,\in\, X\;.\] 
Thus, \,$\left\{\,T_{\,n}\,\right\}$\; is b-weak\,*\,Convergent sequence in \,$X^{\,\ast}_{\,b}$\,.
\end{proof}

\begin{definition}
A linear 2-normed space \,$X$\, is said to be separable if \,$X$\, has a countable dense subset\,.
\end{definition}

\begin{theorem}\label{th7}
Let \,$X$\; be a linear 2-normed space over the field \,$\mathbb{R}$\; and \,$W$\, be a subspace of $\,X$\,.\;Then each bounded b-linear functional \,$T_{\,W}$\, defined on \,$W \,\times\, \left <\,b\,\right >$\; can be extended onto \,$X \,\times\, \left <\,b\,\right >$\; with preservation of the norm\,.\;In other words, there exists a bounded b-linear functional \,$T$\, defined on \,$X \,\times\, \left <\,b\,\right >$\; such that
\[T\,(\,x \,,\, b\,) \,=\, T_{\,W}\,(\,x \,,\, b\,) \;\;\forall\; x \,\in\, W\;  \;\;\&\;\; \left\|\,T_{\,W}\,\right\| \,=\, \|\,T\,\|\;.\] 
\end{theorem}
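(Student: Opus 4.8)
The plan is to reduce the statement to the classical (real) Hahn--Banach theorem by manufacturing a sublinear majorant out of the $2$-norm. First I would fix $b \in X$ and note that a bounded b-linear functional $T_{\,W}$ on $W \times \langle b\rangle$ is, in effect, nothing more than an ordinary linear functional $f_{\,W} : W \to \mathbb{R}$ defined by $f_{\,W}(x) = T_{\,W}(x,b)$, carrying the bound $|f_{\,W}(x)| \le \|T_{\,W}\|\,\|x,b\|$ for all $x \in W$; this is where the hypothesis that $X$ is a real space is used.

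Next I would introduce $p : X \to \mathbb{R}$ by $p(x) = \|T_{\,W}\|\,\|x,b\|$ and check that $p$ is a seminorm, hence sublinear. Subadditivity $p(x+y) \le p(x) + p(y)$ is precisely axiom (N4) scaled by $\|T_{\,W}\|$, and positive homogeneity $p(\alpha x) = |\alpha|\,p(x)$ is axiom (N3); in particular $p \ge 0$. By construction $f_{\,W}(x) \le |f_{\,W}(x)| \le p(x)$ on $W$, so the classical Hahn--Banach theorem furnishes a linear functional $f : X \to \mathbb{R}$ with $f|_{W} = f_{\,W}$ and $f(x) \le p(x)$ for every $x \in X$. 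I would then define $T : X \times \langle b\rangle \to \mathbb{R}$ by $T(x, \lambda b) = \lambda f(x)$; this is evidently a b-linear functional agreeing with $T_{\,W}$ on $W \times \langle b\rangle$. Applying $f(x) \le p(x)$ to both $x$ and $-x$ gives $|T(x,b)| = |f(x)| \le p(x) = \|T_{\,W}\|\,\|x,b\|$, so $T$ is bounded with $\|T\| \le \|T_{\,W}\|$; and since $T$ extends $T_{\,W}$, the supremum defining $\|T\|$ runs over a set containing the one defining $\|T_{\,W}\|$, whence $\|T\| \ge \|T_{\,W}\|$. Combining, $\|T\| = \|T_{\,W}\|$.

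There is essentially no deep obstacle: the whole content is the observation that, with $b$ held fixed, $\|\cdot\,,b\|$ behaves like a seminorm in the first variable, which is exactly the input the classical extension theorem requires. The one place needing a little care is the degenerate situation $b = 0$, or $x$ linearly dependent on $b$, where $\|x,b\| = 0$; there the bound already forces $T_{\,W}(x,b) = 0$, and the inequality $f(x) \le p(x) = 0$ applied to $\pm x$ shows the extension automatically vanishes as well, so norm preservation is not spoiled.

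If one instead prefers a self-contained argument that does not invoke the scalar Hahn--Banach theorem, the alternative is to run the usual Zorn's lemma argument internally on the poset of norm-preserving partial extensions; the crux is then the one-dimensional step, where for $x_{\,0}$ outside the current domain one must pick $T(x_{\,0},b)$ inside the interval $\bigl[\,\sup_{w}\bigl(f(w) - p(w - x_{\,0})\bigr),\ \inf_{w}\bigl(p(w + x_{\,0}) - f(w)\bigr)\,\bigr]$, and subadditivity of $p$ is precisely what guarantees this interval is nonempty. Either route works; the seminorm reduction is the shorter one.
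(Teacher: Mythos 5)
Your proof is correct, and it takes a genuinely different route from the paper's. You reduce the statement to the classical scalar Hahn--Banach theorem: since the second argument is frozen at $b$, the map $x \mapsto T_{\,W}(x,b)$ is an ordinary linear functional on $W$ dominated by the seminorm $p(x) = \|T_{\,W}\|\,\|x,b\|$, whose subadditivity and absolute homogeneity are exactly (N4) (together with the symmetry (N2)) and (N3). The paper instead builds the extension by hand: it carries out the one-dimensional step explicitly, choosing a real number $\alpha$ in the nonempty interval between $\sup_{x \in W}\{T_{\,W}(x,b) - \|T_{\,W}\|\,\|x+x_{\,0},b\|\}$ and $\inf_{x \in W}\{T_{\,W}(x,b) + \|T_{\,W}\|\,\|x+x_{\,0},b\|\}$ --- precisely the interval you describe in your ``alternative'' paragraph --- then \emph{assumes $X$ is separable}, iterates this step over a countable dense set to obtain a norm-preserving extension $T_{g}$ on a dense subspace $W_{g}$, and finally passes to all of $X$ by continuity, setting $T(y,b) = \lim_{n} T_{g}(y_{\,n},b)$ for $W_{g} \ni y_{\,n} \to y$. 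What your reduction buys is brevity and full generality: the Zorn's lemma content is delegated to the classical theorem, so no separability hypothesis is needed, whereas the paper proves only the separable case and merely asserts that the general case ``also holds.'' What the paper's argument buys is self-containedness (no appeal to the classical theorem) at the cost of the extra density-and-limits layer. Your explicit handling of the degenerate situation $\|x,b\| = 0$ is a detail the paper does not address.
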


\begin{proof}
We prove this theorem by assuming \,$X$\, is separable\,.\;This theorem also hold for the spaces which are not separable\,.\;Let \;$x_{\,0} \,\in\, X \,-\, W$\; and consider the set \,$W \,+\, x_{\,0}  \,=\, \{\,x \,+\, t\,x_{\,0} \,:\, x \,\in\, W$\; and \,$t$\, is arbitrary real number $\}$\,.\;Clearly, \,$ W \,+\, x_{\,0}$\; is a subspace of \,$X$\, containing \,$W$\,.\;Let \,$T_{\,W}$\, be a bounded b-linear functional defined on \,$W \,\times\, \left <\,b\,\right >$\, and further suppose that \,$(\,x_{\,1} \,,\, b\,) \;,\; (\,x_{\,2} \,,\, b\,) \,\in\, W \,\times\, \left <\,b\,\right >$\,.\;Now, 
\[ T_{\,W}\,(\,x_{\,1} \,,\, b\,) \,-\, T_{\,W}\,(\,x_{\,2} \,,\, b\,) \,\leq\, \left\|\,T_{\,W}\,\right\|\, \left\|\,x_{\,1} \,-\, x_{\,2} \,,\, b\,\right\| \]
\[\,\leq\, \left\|\,T_{\,W}\,\right\|\, \left(\,\left\|\,x_{\,1} \,+\, x_{\,0} \,,\, b\,\right\| \,+\, \left\|\,x_{\,2} \,+\, x_{\,0} \,,\, b\,\right\|\,\right) \] 
\[\Rightarrow\, \,T_{\,W}\,(\,x_{\,1} \,,\, b\,) \,-\, \left\|\,T_{\,W}\,\right\| \, \left\|\,x_{\,1} \,+\, x_{\,0} \,,\, b\,\right\| \,\leq\, T_{\,W}\,(\,x_{\,2} \,,\, b\,) \,+\, \left\|\,T_{\,W}\,\right\|\, \left\|\,x_{\,2} \,+\, x_{\,0} \,,\, b\,\right\|\;.\]
Since \,$(\,x_{\,1} \,,\, b\,) \,,\, (\,x_{\,2} \,,\, b\,)$\; be two arbitrary elements in \,$W \,\times\, \left<\,b\,\right>$, we obtain 
\[ \sup\limits_{x \;\in\; W} \left\{\, T_{\,W}\,(\,x \,,\, b\,) \,-\, \left\|\,T_{\,W}\,\right\|\, \left\|\,x \,+\, x_{\,0} \,,\, b\,\right\|\,\right\} \]
\[\,\leq\, \inf\limits_{x \;\in\; W} \left\{\, T_{\,W}\,(\,x \,,\, b\,) \,+\, \left\|\,T_{\,W}\,\right\|\, \left\|\,x \,+\, x_{\,0} \,,\, b\,\right\|\,\right\} \]
and hence we can find a real number \,$\alpha$\, such that
\[\sup\limits_{x \;\in\; W} \left\{\,T_{\,W}\,(\,x \,,\, b\,) \,-\, \left\|\,T_{\,W}\,\right\|\, \left\|\,x \,+\, x_{\,0} \,,\, b\,\right\|\,\right\} \,\leq\, \alpha \hspace{6cm}\]
\begin{equation} \label{eq4}
\hspace{3cm}\,\leq\, \inf\limits_{x \;\in\; W} \left\{\,T_{\,W}\,(\,x \,,\, b\,) \,+\, \left\|\,T_{\,W}\,\right\|\, \left\|\,x \,+\, x_{\,0} \,,\, b\,\right\|\,\right\}\;. 
\end{equation}
Now we define a b-linear functional \,$T_{\,0}$\, on \,$\left(\,W \,+\, x_{\,0}\,\right) \,\times\, \left <\,b\,\right >$\; by, 
\[ T_{\,0}\,(\,y \,,\, b\,) \,=\, T_{\,W}\,(\,x \,,\, b\,) \,-\, t\,\alpha \;\; \;\forall\; (\,y \,,\, b\,) \,\in\, \left(\,W \,+\, x_{\,0}\,\right) \,\times\, \left <\,b\,\right >\]
where \,$y \,=\, x \,+\, t\,x_{\,0} \;,\; t$\, is a unique real number and \,$\alpha$\, is the real number satisfying (\ref{eq4}) and \,$x \,\in\, W$\,.\;Clearly, \,$T_{\,W}\,(\,y \,,\, b\,) \,=\, T_{\,0}\,(\,y \,,\, b\,) \; \;\forall\; y \,\in\, W $\,.\;We now show that \,$T_{\,0}$\, is bounded on \,$\left(\, W \,+\, x_{\,0}\,\right) \,\times\, \left <\,b\,\right >$\; and \,$\left\|\,T_{\,W}\,\right\| \,=\, \left\|\,T_{\,0}\,\right\|$\,.\;For the boundedness part of \,$T_{\,0}$\,, we consider the following two cases
\begin{itemize}
\item[(i)]\;\;  First we consider \,$t \,>\, 0$\,.\;Since \,$W$\; is a subspace, we get \,$\dfrac{x}{t} \,\in\, W$, whenever \,$x \,\in\, W$\; and then the inequality (\,\ref{eq4}\,) implies that,
\[ T_{\,0}\,(\,y \,,\, b\,) \,=\, t \cdot\, \left\{\,\dfrac{1}{t}\,T_{\,W}\,(\,x \,,\, b\,) \,-\, \alpha \,\right\} \,=\, t \cdot\, \left\{\,T_{\,W}\,\left(\,\dfrac{x}{t} \,,\, b\,\right) \,-\, \alpha \,\right\} \]
\[ \,\leq\, t \cdot\, \left\|\,T_{\,W}\,\right\|\, \left\|\, \dfrac{x}{t} \,+\, x_{\,0} \,,\, b\,\right\|  \hspace{3cm} \]
\[\,=\, \left\|\,T_{\,W}\,\right\|\, \left\|\,x \,+\, t\,x_{\,0} \,,\, b\,\right\| \,=\, \left\|\,T_{\,W}\,\right\|\, \left\|\,y \,,\, b\,\right\|\;.\]
\item[(ii)]\;\; Next we consider \,$t \,<\, 0$\, and again using the inequality (\ref{eq4}\,)  
\[ T_{\,W}\,\left(\,\dfrac{x}{t} \,,\, b\,\right) \,-\, \alpha\,\,\geq\, \,-\, \left\|\,T_{\,W}\,\right\|\, \left\|\,\dfrac{x}{t} \,+\, x_{\,0} \,,\, b\,\right\| \hspace{2.5cm}\]
\[ \hspace{3.8cm} \,=\, \,-\,\dfrac{1}{|\,t\,|}\, \left\|\,T_{\,W}\,\right\|\, \left\|\,y \,,\, b\,\right\| \,=\, \dfrac{1}{t}\, \left\|\,T_{\,W}\,\right\|\, \left\|\,y \,,\, b\,\right\| \;.\]
So, 
\[T_{\,0}\,(\,y \,,\, b\,) \,=\, t \cdot\, \left\{\,T_{\,W}\,\left(\,\dfrac{x}{t} \,,\, b\,\right) \,-\, \alpha\,\right\} \,\leq\, t \cdot\, \dfrac{1}{t}\, \left\|\,T_{\,W}\,\right\|\, \left\|\,y \,,\, b\,\right\| \,=\, \left\|\,T_{\,W}\,\right\|\, \left\|\,y \,,\, b\,\right\|\;.\]
\end{itemize}
Therefore,
\begin{equation} \label{eq5}
T_{\,0}\,(\,y \,,\, b\,) \,\leq\, \left\|\,T_{\,W}\,\right\|\, \|\,y \,,\, b\,\| \; \;\;\forall\; (\,y \,,\, b\,) \,\in\, \left(\, W \,+\, x_{\,0} \,\right) \,\times\, \left <\,b\,\right >\;.
\end{equation}
Replacing \,$ \,-\, y$\, for \,$y$\, in (\,\ref{eq5}\,), we get 
\[ T_{\,0}\,(\,-\, y \,,\, b\,) \,\leq\, \left\|\,T_{\,W}\,\right\| \, \|\,-\, y \,,\, b\,\| \,\Rightarrow\, \,-\, T_{\,0}\,(\,y \,,\, b\,) \,\leq\, \left\|\,T_{\,W}\,\right\|\, \|\,y \,,\, b\,\|\;.\]
Combining this with (\,\ref{eq5}\,), we obtain
\[ \left|\,T_{\,0}\,(\,y \,,\, b\,)\,\right| \,\leq\, \left\|\,T_{\,W}\,\right\|\, \|\,y \,,\, b\,\| \; \;\;\forall\; (\,y \,,\, b\,) \,\in\, \left(\, W \,+\, x_{\,0} \,\right) \,\times\, \left <\,b\,\right >\;.\]
This shows that \,$T_{\,0}$\; is bounded and \,$\left\|\,T_{\,0}\,\right\| \,\leq\, \left\|\,T_{\,W}\,\right\|$\,.\;Since the domain of \,$T_{\,W}$\, is a subset of the domain of \,$T_{\,0}$\,, we get \,$\left\|\,T_{\,0}\,\right\| \,\geq\, \left\|\,T_{\,W}\,\right\|$\, and hence \,$\left\|\,T_{\,0}\,\right\| \,=\, \left\|\,T_{\,W}\,\right\|$\,.\;Thus we have seen that \,$T_{\,0}\,(\,x \,,\, b\,)$\; is the extension of \,$T_{\,W}\,(\,x \,,\, b\,)$\; onto \;$\left(\, W \,+\, x_{\,0} \,\right) \,\times\, \left <\,b\,\right >$\; with \;$\left\|\,T_{\,0}\,\right\| \,=\, \left\|\,T_{\,W}\,\right\|$\,.\;Since \,$X$\, is separable, so there exists a countable dense subset \,$D$\, of \,$X$\,.\;We select elements from this dense subset those belong to \,$X \,-\, W$\, and arrange them as a sequence \,$\left\{\,x_{\,0} \;,\; x_{\,1} \;,\; x_{\,2} \,\cdots\, \right\}$\,.\;By the previous procedure, we get the extension of \,$T_{\,W}\,(\,x \,,\, b\,)$\; onto \,$\left(\,W \,+\, x_{\,0} \,\right) \,\times\, \left <\,b\,\right > \,=\, W_{\,1} \,\times\, \left <\,b\,\right > \,,\, \left(\, W_{\,1} \,+\, x_{\,1} \,\right) \,\times\, \left <\,b\,\right > \,=\, W_{\,2} \,\times\, \left <\,b\,\right > \,,\, \left(\,W_{\,2} \,+\, x_{\,2}\,\right) \,\times\, \left <\,b\,\right > \,=\, W_{\,3} \,\times\, \left <\,b\,\right >$\; and so on\,.\;Then we arrive at a bounded b-linear functional \,$T_{g} \,:\, W_{g} \,\times\, \left <\,b\,\right > \,\to\, \mathbb{K}$\,, where \,$W_{g}$\; is everywhere dense in \,$X$\, and that contains \;$W_{n}$\; for \;$n \,=\,1 \,,\, 2 \,,\, 3 \,\cdots$\, and \;$\left\|\,T_{g}\,\right\| \,=\, \left\|\,T_{\,W}\,\right\|$\,.\;If \,$y \,\in\, X \,-\, W_{g}$\,, then there exists a sequence \,$\left\{\,y_{\,n}\,\right\} $\; in \;$W_{g}$\; such that \;$y \,=\, \lim\limits_{n \to \infty} \,y_{\,n}$\,.\;We now define 
\[ T\,(\,y \,,\, b\,) \,=\, \lim\limits_{n \to \infty}\,T_{g}\,(\,y_{\,n} \,,\, b\,)\;.\]
If \,$y \,\in\, W_{g}$\,, we can put in particular \,$y_{\,1} \,=\, y_{\,2} \,=\, \,\cdots\, \,=\, y$\; and so the b-linear functional \;$T\,(\,y \,,\, b\,)$\; is an extension of \,$T_{g}\,(\,y \,,\, b\,)$\; onto \,$X \,\times\, \left <\,b\,\right >$\,.\;Now
\[ \left|\,T\,(\,y \,,\, b\,)\,\right| \,=\, \lim\limits_{n \to \infty} \left|\,T_{\,g}\,(\,y_{\,n} \,,\, b\,)\,\right| \,\leq\, \left\|\,T_{\,g}\,\right\|\, \lim\limits_{n \to \infty}\, \left\|\,y_{\,n} \,,\, b\,\right\| \,=\, \left\|\,T_{\,W}\,\right\|\, \left\|\,y \,,\, b\,\right\|\;.\]
This shows that \,$T\,(\,y \,,\, b\,)$\; is bounded b-linear functional and \,$\|\,T\,\| \,\leq\, \left\|\,T_{\,W}\,\right\|$\,.\;Since the domain of \,$T_{\,W}$\, is a subset of the domain of \,$T$\,, we get \;$\|\,T\,\| \,\geq\, \left\|\,T_{\,W}\,\right\|$\; and therefore \,$\|\,T\,\| \,=\, \left\|\,T_{\,W}\,\right\|$\,.\;Clearly \;$T\,(\,x \,,\, b\,) \,=\, T_{\,W}\,(\,x \,,\, b\,)$\; for \,$x \,\in\, W$\,.\;This proves the theorem\,.
\end{proof}

\begin{theorem}\label{th8}
Let \,$X$\, be a linear 2-normed space over the field \,$\mathbb{R}$\; and let \,$x_{\,0}$\, be an arbitrary non-zero element in \,$X$\,.\;Then there exists a bounded b-linear functional \,$T$\, defined on \;$X \,\times\, \left <\,b\,\right >$\; such that 
\[ \|\,T\,\| \,=\, 1\; \;\;\&\;\;  \;T\,(\,x_{\,0} \,,\, b\,) \,=\, \left\|\,x_{\,0} \,,\, b\,\right\|\;.\]
\end{theorem}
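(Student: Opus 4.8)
The plan is to obtain $T$ as a norm-preserving Hahn--Banach extension of an explicitly chosen b-linear functional living on the one-dimensional subspace spanned by $x_{\,0}$. Set $W \,=\, \left\{\,t\,x_{\,0} \,:\, t \,\in\, \mathbb{R}\,\right\}$, which is a subspace of $X$. Since $x_{\,0} \,\neq\, 0$, every element of $W$ has a unique representation $t\,x_{\,0}$, so the formula
\[ T_{\,W}\,(\,t\,x_{\,0} \,,\, b\,) \,=\, t\,\left\|\,x_{\,0} \,,\, b\,\right\| \]
well-defines a map $T_{\,W} \,:\, W \,\times\, \left <\,b\,\right > \,\to\, \mathbb{R}$. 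Additivity and homogeneity in the first slot are read straight off the formula, so $T_{\,W}$ is a b-linear functional on $W \,\times\, \left <\,b\,\right >$, and plainly $T_{\,W}\,(\,x_{\,0} \,,\, b\,) \,=\, \left\|\,x_{\,0} \,,\, b\,\right\|$.

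The one computation to carry out is the norm of $T_{\,W}$. By axiom $(\,N3\,)$ for the 2-norm,
\[ \left|\,T_{\,W}\,(\,t\,x_{\,0} \,,\, b\,)\,\right| \,=\, |\,t\,|\,\left\|\,x_{\,0} \,,\, b\,\right\| \,=\, \left\|\,t\,x_{\,0} \,,\, b\,\right\| \;\;\;\forall\; t \,\in\, \mathbb{R}\;,\]
so $\left|\,T_{\,W}\,(\,w \,,\, b\,)\,\right| \,=\, \left\|\,w \,,\, b\,\right\|$ for every $w \,\in\, W$. In particular $T_{\,W}$ is bounded, and when $x_{\,0}$ and $b$ are linearly independent (so that $\left\|\,x_{\,0} \,,\, b\,\right\| \,>\, 0$) one checks from the definition of the norm that no $M \,<\, 1$ can serve as a bound, whence $\left\|\,T_{\,W}\,\right\| \,=\, 1$.

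Finally I would apply Theorem \ref{th7} to the bounded b-linear functional $T_{\,W}$ on the subspace $W$ of $X$: it furnishes a bounded b-linear functional $T$ on $X \,\times\, \left <\,b\,\right >$ with $T\,(\,x \,,\, b\,) \,=\, T_{\,W}\,(\,x \,,\, b\,)$ for all $x \,\in\, W$ and $\left\|\,T\,\right\| \,=\, \left\|\,T_{\,W}\,\right\| \,=\, 1$; evaluating at $x \,=\, x_{\,0}$ gives $T\,(\,x_{\,0} \,,\, b\,) \,=\, \left\|\,x_{\,0} \,,\, b\,\right\|$, which is the assertion. I do not anticipate a genuine obstacle once Theorem \ref{th7} is available; the only point needing a word is the degenerate case $x_{\,0} \,\in\, \left <\,b\,\right >$, where $\left\|\,x_{\,0} \,,\, b\,\right\| \,=\, 0$ and $T\,(\,x_{\,0} \,,\, b\,) \,=\, 0$ is forced anyway (since $\left|\,T\,(\,x_{\,0} \,,\, b\,)\,\right| \,\leq\, \left\|\,T\,\right\|\,\left\|\,x_{\,0} \,,\, b\,\right\| \,=\, 0$ for any bounded $T$), so there one normalises instead on the span of some vector independent of $b$ — which exists because $\dim X \,>\, 1$ — and extends that to secure $\left\|\,T\,\right\| \,=\, 1$.
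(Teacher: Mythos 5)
Your proposal is correct and follows essentially the same route as the paper: define $T_{\,W}\,(\,t\,x_{\,0}\,,\,b\,) \,=\, t\,\left\|\,x_{\,0}\,,\,b\,\right\|$ on $W \,=\, \operatorname{span}\{x_{\,0}\}$, verify $\left\|\,T_{\,W}\,\right\| \,=\, 1$, and extend by Theorem \ref{th7}. Your closing remark about the degenerate case $\left\|\,x_{\,0}\,,\,b\,\right\| \,=\, 0$ is a point the paper silently skips (there $T_{\,W}$ is the zero functional, so its norm is $0$, not $1$), and your fix is a genuine, if small, improvement.
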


\begin{proof}
Consider the set \;$W \,=\, \{\,t\,x_{\,0}\, \;|\;$\,where \;$t$\; is a arbitrary real number \,$\}$\,.\;Then it is easy to prove that \,$W$\, is a subspace of \,$X$\,.\;Define  \,$T_{\,W} \,:\, W \,\times\, \left <\,b\,\right > \,\to\, \mathbb{R}$\;  by,
\[ T_{\,W}\,(\,x \,,\, b\,) \,=\, T_{\,W}\,(\,t\,x_{\,0} \,,\, b\,) \,=\, t\,\left\|\,x_{\,0} \,,\, b\,\right\| \;,\; t \,\in\, \mathbb{R} \;.\] 
Note that \,$T_{\,W}$\, is a b-linear functional on \;$W \,\times\, \left <\,b\,\right >$\; with the property that 
\[T_{\,W}\,(\,x_{\,0} \,,\, b\,) \,=\, \left\|\,x_{\,0} \,,\, b\,\right\|\;.\] 
Further, for any \,$x \,\in\, W$, we have 
\[\left|\,T_{\,W}\,(\,x \,,\, b\,)\,\right| \,=\, \left|\,T_{\,W}\,(\,t\,x_{\,0} \,,\, b\,)\,\right| \,=\, t\, \left\|\,x_{\,0} \,,\, b\,\right\| \,=\, \left\|\,t\,x_{\,0} \,,\, b\,\right\| \,=\, \|\,x \,,\, b\,\|\;.\] 
So, \,$ T_{\,W}$\; is bounded b-linear functional and \,$\left\|\,T_{\,W}\,\right\| \,=\, 1$\,.\;Now, according to the Theorem (\ref{th7})\,, there exists a bounded b-linear functional \,$T$\, defined on \;$X \,\times\, \left <\,b\,\right >$\; such that \,$T\,(\,x \,,\, b\,) \,=\, T_{\,W}\,(\,x \,,\, b\,) \; \;\forall\; x \,\in\, W$\; and \;$\|\,T\,\| \,=\, \left\|\,T_{\,W}\,\right\|$\,.\;Therefore, \,$T\,(\,x_{\,0} \,,\, b\,) \,=\, T_{\,W}\,(\,x_{\,0} \,,\, b\,) \,=\, \left\|\,x_{\,0} \,,\, b\,\right\|$\; and \;$\|\,T\,\| \,=\, 1$\,.\;This completes the proof\,.
\end{proof}

\begin{theorem}
Let \,$X$\, be a linear 2-normed space over the field \,$\mathbb{R}$\; and let $\,x \,\in\, X$\, and \,$X^{\,\ast}_{\,b}$\; is the Banach space of all bounded b-linear functionals defined on \;$X \,\times\, \left <\,b\,\right >$\,.\;Then 
\[ \|\,x \,,\, b\,\| \,=\, \sup\,\left\{\, \dfrac{|\,T\,(\,x \,,\, b\,)\,|}{\|\,T\,\|} \,:\, T \,\in\, X^{\,\ast}_{\,b} \;,\; T \;\neq\; 0 \,\right\} \;.\]
\end{theorem}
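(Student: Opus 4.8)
This is the 2-normed analogue of the classical corollary $\|x\| = \sup\{|f(x)|/\|f\| : f \neq 0\}$ of the Hahn--Banach theorem, and the plan is to prove the two inequalities separately. For the inequality $\sup\{\,|T(x,b)|/\|T\| : T \in X^{\ast}_{b},\, T \neq 0\,\} \leq \|x,b\|$, I would invoke only the defining boundedness estimate recorded in the Preliminaries: for every nonzero $T \in X^{\ast}_{b}$ one has $|T(x,b)| \leq \|T\|\,\|x,b\|$, hence $|T(x,b)|/\|T\| \leq \|x,b\|$, and taking the supremum over all such $T$ gives the claimed bound. No deeper input is needed here.

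For the reverse inequality I would argue by cases according to whether $x$ and $b$ are linearly dependent. If they are, then $\|x,b\| = 0$ by axiom (N1), and since $|T(x,b)| \leq \|T\|\,\|x,b\| = 0$ forces $T(x,b) = 0$ for every $T \in X^{\ast}_{b}$, the supremum is $0$, which matches $\|x,b\|$. If $x$ and $b$ are linearly independent, then in particular $x \neq 0$ and $\|x,b\| \neq 0$, so I would apply Theorem \ref{th8} with $x_{0} = x$: it furnishes a bounded b-linear functional $T$ on $X \times \left<\,b\,\right>$ with $\|T\| = 1$ and $T(x,b) = \|x,b\|$. For this particular $T$ we get $|T(x,b)|/\|T\| = \|x,b\|$, so the supremum is at least $\|x,b\|$.

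Combining the two bounds yields the stated equality, and as a by-product the supremum is actually attained (by the functional from Theorem \ref{th8}) whenever $x$ and $b$ are independent. The only point requiring care is the degenerate situation $\|x,b\| = 0$, which is exactly why the case distinction is made; apart from that the argument is a one-line consequence of the Hahn--Banach extension theorem (Theorem \ref{th7}) through its corollary Theorem \ref{th8}, so I do not anticipate any genuine obstacle in carrying it out.
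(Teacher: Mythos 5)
Your proposal is correct and follows essentially the same route as the paper: the upper bound comes from the estimate $|\,T\,(\,x \,,\, b\,)\,| \,\leq\, \|\,T\,\|\,\|\,x \,,\, b\,\|$, and the lower bound from the norm-one functional supplied by Theorem \ref{th8}. In fact your case split on linear dependence of $x$ and $b$ is slightly more careful than the paper's, which only excludes $x \,=\, 0$ and thereby overlooks that Theorem \ref{th8} cannot produce a norm-one functional when $x \,\neq\, 0$ but $\|\,x \,,\, b\,\| \,=\, 0$.
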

\begin{proof}
If \,$x\,=\,0$\,, there is nothing to prove\,.\;Let \;$x \,\neq\, 0$\; be any element in \,$X$\,.\;By the Theorem (\ref{th8}), there exists a \;$T_{\,1} \,\in\, X^{\,\ast}_{\,b}$\; such that\;$T_{\,1}\,(\,x \,,\, b\,) \,=\, \|\,x \,,\, b\,\|$\; and \;$\left\|\,T_{\,1}\,\right\| \,=\, 1$\,.\;Therefore,
\begin{equation}\label{eq6}
\sup\,\left\{\;\dfrac{|\,T\,(\,x \,,\, b\,)\,|}{\|\,T\,\|} \,:\, T \,\in\, X^{\,\ast}_{\,b} \;,\; T \;\neq\; 0 \,\right\} \,\geq\, \dfrac{|\,T_{\,1}\,(\,x \,,\, b\,)\,|}{\|\,T_{\,1}\,\|} \,=\, \|\,x \,,\, b\,\|\;.
\end{equation}
On the other hand, \,$|\,T\,(\,x \,,\, b\,)\,| \,\leq\, \|\,T\,\| \, \|\,x \,,\, b\,\| \; \;\;\forall\; T \,\in\, X^{\,\ast}_{\,b}$\; and we obtain 
\begin{equation}\label{eq7}
\sup\,\left\{\, \dfrac{|\,T\,(\,x \,,\, b\,)\,|}{\|\,T\,\|} \,:\,T \,\in\, X^{\,\ast}_{\,b} \;,\; T \;\neq\; 0 \,\right\} \;\leq\; \|\,x \,,\, b\,\|\;.
\end{equation}
From (\ref{eq6}) and (\ref{eq7}), we can write\,, 
\[ \|\,x \,,\, b\,\| \,=\, \sup\,\left\{\, \dfrac{|\,T\,(\,x \,,\, b\,)\,|}{\|\,T\,\|} \,:\, T \,\in\, X^{\,\ast}_{\,b} \;,\; T \;\neq\; 0 \,\right\}\;.\] 
This completes the proof\,.
\end{proof}

%=====================================
\section{Conclusions}
%=====================================

\smallskip\hspace{.6 cm}
Hahn-Banach theorem, Uniform boundedness principle, also known as Banach-Steinhaus theorem, open mapping theorem, closed graph theorem are most fundamental theorems and determining tools in functional analysis\,.\;In this paper, in the settings of linear 2-normed space, we have established necessary and sufficient condition for a linear operator to be closed in terms of its graph, different types of continuity for b-linear functionals and some characterizations of them and finally uniform boundedness principle and Hahn-Banach extension theorem for bounded b-linear functionals\,.\;Yet it remains to establish another few important concepts of functional analysis like, reflexivity of linear 2-normed space, Hahn-Banach separation theorem for bounded b-linear functionals etc.\;in the settings of linear 2-normed space\,.\;Also, these results can further be developed in linear n-normed space\,.

\end{document}